\newtheorem{thm}{Theorem}[section]
\newtheorem{prop}[thm]{Proposition}
\newtheorem{lem}[thm]{Lemma}
\newtheorem{Def}[thm]{Definition}
\newcounter{alphthm}
\newcommand{\be}{\be}
\newcommand{\ee}{\end{equation}}
\newcommand{\ben}{\begin{enumerate}}
\newcommand{\een}{\end{enumerate}}
\newcommand{\pa}{{\partial}}
\title{On the \textrm{k}-nullity foliations in Finsler geometry and completeness}
\author{B. Bidabad\footnote{\tiny{Faculty of Mathematics, Amirkabir University of Technology, Tehran, Iran.(email:bidabad@aut.ac.ir)}}\ \ \ \ \ and\ \ \ \ \ M. Rafie-Rad\footnote{\tiny{Faculty of Mathematics, Mazandaran University, Babolsar, Iran.(email: m.rafiei.rad@gmail.com)}}}
\date{}
\begin{document}
\maketitle
\begin{abstract}
Here, a Finsler manifold $(M,F)$ is considered with corresponding
curvature tensor,  regarded as $2$-forms on the bundle of non-zero tangent vectors. Certain
subspaces of the tangent spaces of $M$ determined by the curvature are introduced and called $k$-nullity foliations of the curvature operator.
 It is shown that if the dimension of foliation is constant then the distribution is involutive and each maximal integral manifold is totally geodesic. Characterization of the $k$-nullity foliation is given, as well as some results concerning constancy of the flag curvature, and
completeness of their integral manifolds, providing completeness of $(M,F)$. The introduced $k$-nullity space is a natural extension of nullity space in Riemannian geometry, introduced by S. S. Chern and N. H. Kuiper and enlarged to Finsler setting by H. Akbar-Zadeh and contains it as a special case.
\\
{\bf Keywords:} Foliation, k-nullity, Finsler manifolds, Curvature operator.\\
{\bf MSC}: 2000 Mathematics subject Classification: 58B20, 53C60, 53C12.
\end{abstract}

\section{Introduction}
Foliations of manifolds occur naturally in various geometric contexts. They arise in connections with some essential topics as vector fields without singularities, integrable
$m$-dimensional distributions, submersions and fibrations, actions of Lie groups, direct constructions
of foliations such as Hopf fibrations, Reeb foliations and finally they appear in the  existence study of solution of certain differential equations. In the later case, S. Tanno in \cite{T} applied the concept of the {\bf k}-nullity spaces to achieve a complete proof for the famous Obata Theorem which is a subject of numerous rigidity results in Riemannian geometry.
The nullity space of the Riemannian curvature tensor was first studied by S. S. Chern and N. H. Kuiper \cite{CK} in 1952. They have shown that, if the index of nullity, $\mu$, of a Riemannian manifold is locally constant, then the manifold admits a locally integrable $\mu$-dimensional distribution whose integral submanifolds are locally flat. O. Kowalski and M. Sekizawa have proved that vanishing of the index of nullity in some senses, results that the tangent sphere bundle is a space of negative scalar curvature \cite{KoSe}.

The concept of nullity spaces are generalized to the ${\bf k}$-nullity spaces in Riemannian geometry in a number of works such as \cite{CM,Gr} and \cite{Mz}.

In this work we answer to the following natural questions: Is there any extension for the concept of {\bf k}-nullity space in Finsler geometry? Is its maximal integral manifold totally geodesic? And finally is its maximal integral manifold complete, provided that $(M,F)$ is complete? Fortunately, the answer to these questions is affirmative. More precisely, we obtain the following results.
\begin{thm}
\label{mainthm1}
Let $(M,F)$ be a Finsler manifold for which the index of {\bf k}-nullity $\mu_{\bf k}$ be constant on an open subset $U\subseteq M$. Then, the local ${\bf k}$-nullity distribution on $U$ is completely integrable.
\end{thm}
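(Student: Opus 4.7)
The plan is to prove involutivity of the local $\mathbf{k}$-nullity distribution, call it $\mathcal{N}_{\mathbf{k}}$, and then invoke Frobenius's theorem. Under the hypothesis that $\mu_{\mathbf{k}}$ is constant on $U$, the subspaces $N_{\mathbf{k}}(x)\subseteq T_xM$ fit together into a smooth distribution of rank $\mu_{\mathbf{k}}$ on $U$: around any point of $U$ one picks $\mu_{\mathbf{k}}$ local sections of $TM$ lying pointwise in $N_{\mathbf{k}}$ and linearly independent at a base point, then continuity together with the constant rank extends independence to a neighborhood. So the real content is closedness under Lie brackets.

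First I would take two smooth local sections $X,Y$ of $\mathcal{N}_{\mathbf{k}}$ and aim to show $[X,Y]\in\mathcal{N}_{\mathbf{k}}$. Because in Finsler geometry the curvature operator $R$ is naturally defined as a $2$-form on the pull-back bundle over $TM\setminus 0$, the statement ``$X$ lies in the $\mathbf{k}$-nullity space'' should first be transported to that setting using horizontal lifts with respect to the Chern (equivalently Berwald-type) connection. The central algebraic manipulation is then standard in spirit: write the defining relation characterising $\mathcal{N}_{\mathbf{k}}$ (roughly $R(X,\cdot)\cdot=0$ up to the $\mathbf{k}$-term), differentiate horizontally along $Y$, subtract the same identity with $X$ and $Y$ swapped, and use the torsion properties of the chosen connection to extract a term proportional to $R([X,Y],\cdot)$. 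The residual terms take the form ``$(\nabla R)$ evaluated on a vector of $\mathcal{N}_{\mathbf{k}}$'' and should be reduced to zero by means of the second Bianchi identity together with the defining vanishing for $X$ and $Y$.

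The main obstacle is the genuine $y$-dependence of the Finsler curvature: $R$ is not a tensor on $M$, so the Bianchi computation must be split into horizontal and vertical pieces, and one must verify that a horizontal derivative in a $\mathbf{k}$-nullity direction does not destroy membership in $N_{\mathbf{k}}$. This is exactly where the hypothesis $\mu_{\mathbf{k}}=\text{const.}$ becomes decisive: the constant rank lets us trivialise $\mathcal{N}_{\mathbf{k}}$ smoothly, differentiate its sections freely, and thereby close the computation. Once this fibred version of the Chern--Kuiper identity is established, involutivity follows, and complete integrability of $\mathcal{N}_{\mathbf{k}}$ on $U$ is the Frobenius conclusion.
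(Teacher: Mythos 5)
Your overall strategy --- horizontal lifts, the second Bianchi identity for the curvature $2$-forms, and Frobenius --- is the same as the paper's, and the smoothness-from-constant-rank remark at the start is fine. But the step you describe as ``residual terms of the form $(\nabla R)$ evaluated on a vector of the nullity, reduced to zero by the second Bianchi identity together with the defining vanishing'' is not what actually has to be controlled, and as stated the computation would not close. The genuinely Finslerian obstruction is this: the Lie bracket of two horizontal lifts is not horizontal, and its vertical part is curvature, $\mu([\hat{X},\hat{Y}])=-\Omega(\hat{X},\hat{Y}){\bf v}$. Consequently, when you write the Bianchi identity $\sigma\bar{\Omega}(\hat{Z},[\hat{X},\hat{Y}])=0$ for $\hat{X},\hat{Y}\in N^{\bf k}_z$ and $\hat{Z}$ arbitrary horizontal, and split each bracket into $H[\cdot,\cdot]+V[\cdot,\cdot]$, the vertical parts feed the $hv$-curvature into the identity: you are left with the mixed terms $\bar{\Omega}(\hat{X},V[\hat{Y},\hat{Z}])={^sP}(X,\mu[\hat{Y},\hat{Z}])$ and $\bar{\Omega}(\hat{Y},V[\hat{Z},\hat{X}])={^sP}(Y,\mu[\hat{Z},\hat{X}])$. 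These are not covariant derivatives of the curvature and they do not vanish individually. In the paper they cancel only after (i) computing $\mu[\hat{Y},\hat{Z}]=-\eta^{\bf k}(\hat{Y},\hat{Z}){\bf v}={\bf k}\{g(Y,{\bf v})Z-g(Z,{\bf v})Y\}$, which uses the nullity hypothesis on $\hat{Y}$, and (ii) invoking the symmetry ${^sP}(X,Y)={^sP}(Y,X)$ of the symmetric part of the $hv$-curvature --- which is precisely why the modified operator $\bar{\Omega}$ is built by subtracting the antisymmetric part ${^aP}$ inside $\eta^{\bf k}$, so that $\bar{\Omega}(H\hat{X},V\hat{Y})={^sP}(X,\dot{Y})$ and $\bar{\Omega}(H\hat{X},V\hat{Y}){\bf v}=0$. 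Without identifying this cancellation mechanism, your argument stalls exactly at the point where the Riemannian Chern--Kuiper proof and the Finsler proof diverge.

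A second, smaller misattribution: you present the constant-rank hypothesis as what ``lets us close the computation.'' It does not; it only guarantees that ${\cal N}^{\bf k}$ is a smooth distribution (so nullity vectors extend to local nullity sections and Frobenius applies). What closes the computation is the algebra above, after which one reads off $\bar{\Omega}(\hat{Z},H[\hat{X},\hat{Y}])=0$ for all horizontal $\hat{Z}$, i.e. $H[\hat{X},\hat{Y}]\in N^{\bf k}_z$, and then $[X,Y]=\varrho(H[\hat{X},\hat{Y}])\in{\cal N}^{\bf k}_x$ because $\varrho$ is an isomorphism on horizontal vectors. So: right skeleton, but the key Finslerian step --- the $hv$-curvature contributions coming from the vertical parts of the brackets and their pairwise cancellation via the symmetry of ${^sP}$ --- is missing and is attributed to the wrong mechanisms.
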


\begin{thm}
\label{coincidence}
The ${\bf k}$-nullity space of a Finsler manifold $(M,F)$ at a point $x\in M$, coincides with the kernel of the related curvature operator of $\Omega$.
\end{thm}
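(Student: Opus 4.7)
The proof should reduce to a direct algebraic comparison at a single point $x \in M$: the objective is to show that the $\mathbf{k}$-nullity space and the kernel of the curvature operator of $\Omega$ are cut out by the same linear system on $T_xM$.

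First I would write everything in local coordinates on the slit tangent bundle, using the horizontal coframe associated to the connection chosen in the paper (Akbar-Zadeh's variant of the Chern connection). The curvature $2$-form decomposes as $\Omega^{i}{}_{j} = \tfrac{1}{2} R^{i}{}_{jkl}(x,y)\, dx^{k} \wedge dx^{l}$, and the associated curvature operator is the linear map sending $Z = Z^{j} \partial_{j}$ to $R^{i}{}_{jkl}(x,y) Z^{j}\, dx^{k} \wedge dx^{l} \otimes \partial_{i}$, whose kernel is precisely the set of $Z \in T_xM$ satisfying $R^{i}{}_{jkl}(x,y) Z^{j} = 0$ for all $i, k, l$.

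Next I would unfold the definition of the $\mathbf{k}$-nullity space introduced before Theorem \ref{mainthm1}: a vector $Z$ belongs to $N_{\mathbf{k}}(x)$ when the corresponding curvature condition --- in coordinates, $R^{i}{}_{jkl}(x,y) Z^{j} X^{k} Y^{l} = 0$ for every $X, Y \in T_xM$ --- holds at $x$. Since the vanishing is required for all $X$ and $Y$, it is equivalent to $R^{i}{}_{jkl}(x,y) Z^{j} = 0$, which matches the kernel equations from the previous step. Equality of the two solution spaces then yields the claim.

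The principal obstacle will be notational rather than conceptual. Unlike the Riemannian case, the Finslerian components $R^{i}{}_{jkl}$ carry a $y$-dependence and their tensorial symmetries are weaker, so care is needed to (i) identify the horizontal lift used to transport $Z \in T_xM$ into the horizontal subbundle of $T(TM\setminus\{0\})$ where $\Omega$ naturally lives, and (ii) confirm that the slot in which $Z$ is contracted to obtain the curvature operator agrees with the slot in which $Z$ appears in the defining condition of $N_{\mathbf{k}}(x)$. Once this bookkeeping is straightened out, the statement reduces to the linear-algebraic observation above, and no further analytic input is required.
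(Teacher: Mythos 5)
There is a genuine gap here, and it is not the ``bookkeeping'' issue you flag at the end: you have written the defining condition of the ${\bf k}$-nullity space with the candidate vector in the wrong slot, and this turns the theorem into a tautology, which it is not. In the paper, $\hat X\in N^{\bf k}_z$ means that $\bar{\Omega}(\hat X,\hat Y)=0$ \emph{as an operator} for every horizontal $\hat Y$; in components the nullity vector is contracted into one of the two-form slots, i.e.\ $\bar{\Omega}^i_{\ jkl}X^k=0$ for all $i,j,l$. The kernel $\ker_x\bar{\Omega}$ instead consists of those $Z$ with $\bar{\Omega}^i_{\ jkl}Z^j=0$, so $Z$ sits in the slot of the vector being acted upon. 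Your proposal places $Z$ in the $j$-slot in \emph{both} definitions, so ``equality of the two solution spaces'' follows by inspection --- but only because the two linear systems you compare are literally the same system. The entire content of the theorem is the exchange of these two slots. In Riemannian geometry this is the pair symmetry $R_{ijkl}=R_{klij}$ (itself a consequence of the first Bianchi identity); for the Cartan connection the $hh$-curvature has no such symmetry in general, the first Bianchi identity carries torsion terms, and the exchange genuinely has to be proved.

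Concretely, the paper's argument takes $\hat Z\in N^{\bf k}_z$ and arbitrary horizontal $\hat X,\hat Y$, applies identity (1) of Lemma \ref{lemi} so that the two curvature terms containing $\hat Z$ in a form-slot drop out, and is left with $\bar{\Omega}(\hat X,\hat Y)Z$ expressed through Cartan-torsion terms $\tau(\cdot,[\cdot,\cdot])$; these are then shown to vanish using $\mu([\hat X,\hat Y])=-\Omega(\hat X,\hat Y){\bf v}$ together with the skew-symmetry (3) of Lemma \ref{lemi} and the symmetry of $T$. That gives ${\cal N}^{\bf k}_x\subseteq\ker_x\bar{\Omega}$. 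The reverse inclusion is a separate dimension count: one shows $Im_x\bar{\Omega}\subseteq\overset{\perp}{{\cal N}}{^{\bf k}_x}$ and that the induced map $T_xM/\ker_x\bar{\Omega}\longrightarrow\overset{\perp}{{\cal N}}{^{\bf k}_x}$ is injective, forcing equality of dimensions. None of this is notational; it is exactly where the hypotheses on the Cartan connection ($\nabla g=0$, $S=0$, the properties of $T$) enter. A secondary point: the theorem concerns the \emph{related} operator $\bar{\Omega}=\Omega-\eta^{\bf k}$, not $\Omega$ itself, so your coordinate expression should involve $R^i_{\ jkl}-{\bf k}\{g_{jk}\delta^i_{\ l}-g_{jl}\delta^i_{\ k}\}$ rather than $R^i_{\ jkl}$ alone.
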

D. Ferus has proved that the maximal integral manifolds of nullity foliation are totally geodesic \cite{F2}. This result has been extended to the Finsler case by H. Akbar-Zadeh \cite{A5}. Here, we prove the same result for {\bf k}-nullity foliation in Finsler manifolds.
\begin{thm}
\label{lema}
Let $(M,F)$ be a Finsler manifold. If the ${\bf k}$-nullity space is locally constant on the open subset $U$ of $M$, then every $\bf
k$- nullity integral manifold $N$ in $U$ is an auto-parallel Finsler submanifold with
non-negative constant flag curvature {\bf k}. Moreover, $(N,\tilde{F})$ is a $P$-symmetric space.
\end{thm}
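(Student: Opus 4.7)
The statement decomposes into four sub-claims about the integral manifold $N$ (whose existence is provided by Theorem~\ref{mainthm1}): (a) $N$ is auto-parallel; (b) $(N,\tilde F)$ has flag curvature identically $\mathbf{k}$; (c) $\mathbf{k}\ge 0$; (d) $(N,\tilde F)$ is $P$-symmetric. I would treat them in that order.

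For (a), the plan is to show that the $\mathbf{k}$-nullity distribution is parallel along itself with respect to the Chern connection. By Theorem~\ref{coincidence}, a vector lies in the nullity space iff it annihilates the curvature operator $\Omega$. Thus, for a local horizontal section $X$ of the distribution and $Y\in T_xN$, I would compute $\Omega(\nabla_Y X,\cdot\,)$ by covariantly differentiating the defining identity along $Y$ and invoking the second Bianchi identity for the $hh$-curvature $R$; the constancy of $\mu_{\mathbf{k}}$ on $U$ lets us absorb the derivative of $R$ into terms that are already in the nullity distribution, yielding $\nabla_Y X\in\ker\Omega$. Auto-parallelism of $N$ then follows from the characterisation of totally geodesic/auto-parallel Finsler submanifolds via vanishing of the second fundamental form applied to tangent sections of the distribution. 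This will be the principal obstacle, because unlike Ferus's Riemannian proof the Finsler computation lives on the slit tangent bundle $TM\setminus 0$ and one has to manage the extra mixed $hv$-curvature $P$ entering the Finsler Bianchi identities; choosing an appropriate Finsler-adapted frame and using Akbar-Zadeh's formulation of Bianchi should keep the extra terms under control.

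Claim (b) is then essentially immediate from the very definition of the $\mathbf{k}$-nullity space: any flag $(v,W)$ with $v,W\in T_xN$ has flag curvature $\mathbf{k}$ in $(M,F)$, and auto-parallelism of $N$ kills the Gauss correction so the induced flag curvature in $(N,\tilde F)$ is the same constant $\mathbf{k}$. For (c), the Jacobi-type operator $W\mapsto R(W,v)v$ restricted to $T_xN$ is symmetric with respect to the positive-definite fundamental tensor $g_v$ and acts as $\mathbf{k}F^2(v)\cdot\mathrm{id}$ on the $g_v$-orthogonal complement of $v$ in $T_xN$; the existence of such a tangent direction together with positive-definiteness of $g_v$ forces $\mathbf{k}\ge 0$. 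Finally, for (d), I would invoke Akbar-Zadeh's theorem that on a Finsler space of constant flag curvature the horizontal $P$-curvature (equivalently the Landsberg tensor and its relevant covariant derivatives) satisfies a transport equation whose only solution compatible with the constancy of $\mathbf{k}$ on the whole of $N$ is $P\equiv 0$, which is precisely the $P$-symmetric condition for $(N,\tilde F)$.
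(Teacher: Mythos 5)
Your plan for (a) and (b) matches the paper's in outline: differentiate the identity $\bar{\Omega}(\widetilde{X},\widetilde{Y})U=0$ along a nullity direction $\widetilde{Z}$, invoke the second Bianchi identity of Lemma \ref{lemi}, and conclude $\alpha(\widetilde{Z},U)=0$ because $\alpha$ is normal-valued while the computation forces its value into $\ker\bar{\Omega}={\cal N}^{\bf k}_x$; constancy of the flag curvature of $(N,\widetilde{F})$ then follows by reading off $\widetilde{R}(X,{\bf v}){\bf v}$ from the vanishing of $\bar{\Omega}$ on $N$, auto-parallelism having removed the Gauss correction. However, in (a) you defer precisely the step where the Finslerian difficulty sits. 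The Bianchi identity leaves the residual term $\bar{\Omega}(\widetilde{Z},V[\widetilde{X},\widetilde{Y}])U={^sP}(\widetilde{Z},\mu[\widetilde{X},\widetilde{Y}])U$, because the bracket of two horizontal fields has vertical part $-\Omega(\hat{X},\hat{Y}){\bf v}$. Asserting that an adapted frame should ``keep the extra terms under control'' is not an argument; the paper kills this term by feeding the torsion identity $T_{a\alpha j}=0$ of Eq.(\ref{T Cartan}) (a by-product of the proof of Theorem \ref{coincidence}) into the formula (\ref{Ps formula}) for ${^sP}$. Without that input, auto-parallelism is not established.

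Parts (c) and (d) contain outright errors. Non-negativity of ${\bf k}$ is not a conclusion but a hypothesis: the tensor $\eta^{\bf k}$, and hence the ${\bf k}$-nullity space itself, is only defined for non-negative real ${\bf k}$, so there is nothing to prove; moreover the argument you give for it is invalid, since an operator $W\mapsto R(W,v)v$ that is symmetric with respect to a positive-definite $g_v$ can perfectly well have negative eigenvalues (hyperbolic space). For (d), $P$-symmetry means $P(X,Y)=P(Y,X)$, i.e. ${^aP}=0$, not $P\equiv 0$; and constant flag curvature does not imply $P\equiv 0$ (that would make every such space Landsberg, which fails already for standard projectively flat examples of constant curvature). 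The paper obtains $P$-symmetry directly from auto-parallelism by identifying the induced $hv$-curvature of $N$ with ${^sP}(X,\dot{Y})$, which is symmetric by construction. If you prefer to quote a black-box result, the correct one is that constant flag curvature implies $P$-symmetry, not the vanishing of $P$.
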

The completeness of the nullity foliations is studied by D. Ferus in \cite{F1}. The similar result is carried out for Finsler manifolds by H. Akbar-Zadeh \cite{A5} in 1972.
\begin{thm}
\label{mainthm2}
Let $(M,F)$ be a complete Finsler manifold and $G$ an open subset of $M$ on which $\mu_{\bf k}$ is minimum. Then, every integral manifold of the {\bf k}-nullity foliation in $G$ is a complete submanifold of $M$.
\end{thm}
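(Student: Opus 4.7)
The plan is to combine the auto-parallel property established in Theorem \ref{lema} with the geodesic completeness of $(M,F)$, and then show that a geodesic of $M$ which starts tangent to the ${\bf k}$-nullity distribution remains tangent to it for all time. Fix a maximal integral manifold $N$ of the ${\bf k}$-nullity foliation through a point $p\in G$ and let $\gamma\colon[0,a)\to N$ be a unit-speed geodesic of $(N,\tilde F)$ with $\gamma(0)=p$. Since $N$ is auto-parallel in $M$ by Theorem \ref{lema}, $\gamma$ is simultaneously a geodesic of $(M,F)$; by completeness of $(M,F)$ it extends to a geodesic $\gamma\colon\mathbb{R}\to M$. It therefore suffices to verify that $\gamma(t)$ never leaves $N$.

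Let $\mu_0$ denote the common value of $\mu_{\bf k}$ on $G$, and set $T=\sup\{t\geq 0 : \gamma([0,t])\subset N\}$. Suppose for contradiction that $T<\infty$. Choose a basis $\{e_1,\ldots,e_{\mu_0}\}$ of the ${\bf k}$-nullity space at $p$ with $e_1=\dot\gamma(0)$, and parallel transport it along $\gamma$ to obtain a linearly independent frame $\{E_1(t),\ldots,E_{\mu_0}(t)\}$. The crucial step is to show that each $E_i(t)$ lies in the ${\bf k}$-nullity space at $\gamma(t)$. Using Theorem \ref{coincidence} to identify this space with $\ker\Omega_{\gamma(t)}$, and invoking the second Bianchi identity together with the initial condition $\Omega_p(e_i,\cdot)=0$, one reduces the problem to a homogeneous linear system of first-order ODEs along $\gamma$ for the quantities $\Omega_{\gamma(t)}(E_i(t),\cdot)$; uniqueness of solutions with vanishing initial data forces these to remain identically zero, so $E_i(t)\in\ker\Omega_{\gamma(t)}$. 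Consequently $\mu_{\bf k}(\gamma(t))\geq\mu_0$, and by minimality of $\mu_0$ equality holds, so $\gamma(t)\in G$ and $\dot\gamma(t)=E_1(t)$ remains tangent to the distribution.

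At $q=\gamma(T)$ the curve is therefore tangent to the foliation and lies in the leaf through $q$; since this leaf shares the arc $\gamma((T-\varepsilon,T))$ with $N$, it coincides with the maximal integral manifold $N$, so $\gamma$ may be continued inside $N$ past $T$, contradicting the choice of $T$. The same argument for $t<0$ yields a global geodesic $\gamma\colon\mathbb{R}\to N$, establishing completeness of $(N,\tilde F)$. The main obstacle is the invariance of $\ker\Omega$ under parallel transport along $\gamma$: in contrast with the Riemannian setting, $\Omega$ lives on the pulled-back bundle over the slit tangent bundle, so the Bianchi identity and the corresponding ODE must be derived with respect to the Chern (or Berwald) connection with careful bookkeeping of the horizontal and vertical contributions that are intrinsic to Finsler curvature.
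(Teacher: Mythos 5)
Your overall architecture matches the paper's: use Theorem \ref{lema} to see that a geodesic of $(N,\widetilde{F})$ is a geodesic of $(M,F)$, extend it by completeness of $M$, and then argue via a parallel frame and a linear ODE coming from the second Bianchi identity that the extension cannot leave the leaf. But the crux of your argument is backwards. Parallel-transporting a basis $\{e_1,\dots,e_{\mu_0}\}$ of ${\cal N}^{\bf k}_{p}$ along $\gamma$ and showing that the $E_i(t)$ stay in the nullity space only yields $\mu_{\bf k}(\gamma(t))\ge\mu_0$ --- and for $t<T$ this is automatic anyway, since $N$ is auto-parallel, so parallel transport along curves in $N$ preserves $TN={\cal N}^{\bf k}$, while at $t=T$ it follows by continuity because the nullity condition is closed. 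The inequality $\mu_{\bf k}\ge\mu_0$ holds at every point of $M$ simply because $\mu_0$ is the minimum value; minimality therefore cannot be invoked to upgrade $\ge$ to $=$. Since $\mu_{\bf k}$ is only upper semi-continuous, the genuine danger is that the index jumps \emph{up} at $q=\gamma(T)$, i.e.\ that new nullity vectors appear there; in that case $q\notin G$, the foliation is not defined at $q$, and $N$ really does fail to contain $q$. Nothing in your proposal excludes this, and it is precisely the point of the theorem.

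That exclusion is where the paper's work lies. Assuming $\mu_{\bf k}(q)>\mu_0$, it selects a parallel vector field ${\bf E}_a$ along the geodesic lying in $\overset{\perp}{{\cal N}}{}^{\bf k}_{\gamma(t)}$ for $t<c$ but with ${\bf E}_a(c)\in{\cal N}^{\bf k}_{q}$, forms $f_{ija}=\bar{\Omega}(\hat{{\bf E}}_i,\hat{{\bf E}}_j){\bf E}_a$, and uses the Bianchi identity to obtain a homogeneous linear ODE for $f_{ija}$ integrated \emph{backwards} from the terminal condition $f_{ija}(c)=0$; uniqueness forces $f_{ija}\equiv0$ near $c$, so ${\bf E}_a(t)$ would already be a nullity vector for $t<c$, contradicting its choice. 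Your forward ODE for the original basis cannot be repurposed here, because the quantity that must be shown to vanish is attached to a vector that is \emph{not} in the nullity space along the curve. A minor further point: the relevant operator throughout is the modified curvature $\bar{\Omega}=\Omega-\eta^{\bf k}$ of Eq.(\ref{def bar omega}); for ${\bf k}\neq0$, Theorem \ref{coincidence} identifies the ${\bf k}$-nullity space with $\ker\bar{\Omega}$, not with $\ker\Omega$.
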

It is worth mentioning that M. Sekizawa and S. Tachibana have studied $k^{th}$ nullity foliations as another generalization of Chern and Kuiper's nullity in Riemannian geometry by considering $k^{th}$ consecutive derivative of the curvature tensor \cite{Se,TS}.
\section{Preliminaries and terminologies.}
\subsection{Regular connections and Finsler manifolds.}
 Let $M$ be a
connected differentiable manifold of dimension $n$. We adopt here the notations and terminologies of \cite{A1}. Denote the bundle of tangent vectors of $M$ by
$p:TM\longrightarrow M$, the fiber bundle of non-zero tangent
vectors of $M$ by
$\pi:TM_0\longrightarrow M$ and the
pulled-back tangent bundle by $\pi^*TM\longrightarrow TM_0$. Any point of $TM_0$ is denoted
by $z=(x,v)$, where $x=\pi z\in M$ and $v\in T_{\pi z}M$. By $TTM_0$
we denote the tangent bundle of $TM_0$ and by $\varrho$ the canonical
linear mapping
$$\varrho:TTM_0\longrightarrow \pi^*TM,$$ where $ \varrho=\pi_*$. For all $z\in TM_0$, let ${\cal V}_zTM$ be the
set of vertical vectors at $z$, that is, the set of vectors which
are tangent to the fiber through $z$. Equivalently, ${\cal V}_zTM=\ker \pi_*$
where
$\pi_*:TTM_0\longrightarrow TM$ is the linear tangent mapping.\\
Let $\nabla$ be a linear connection on the vector bundle
$\pi^*TM\longrightarrow TM_0$. We define a linear mapping
$$\mu:TTM_0\longrightarrow \pi^*TM,$$ by
$\mu(\hat{X})=\nabla_{\hat{X}}{\bf v}$ where $\hat{X}\in TTM_0$
and ${\bf v}$ is the canonical section of $\pi^*TM$.

The connection $\nabla$ is said to be {\it regular}, if $\mu$ defines an isomorphism between ${\cal V}TM_0$ and
$\pi^*TM$. In this case, there is the horizontal distribution ${\cal H}TM$ such that we have the Whitney sum:
\[
TTM_0={\cal H}TM\oplus{\cal V}TM.
\]
This decomposition permits to write a vector $\hat{X}\in TTM_0$ into the form $\hat{X}=H\hat{X}+V\hat{X}$ uniquely. In the sequel we denote all vector fields on $TM_0$ by $\hat{X}, \hat{Y}$, etc and the corresponding sections of $\pi^*TM$ by $X=\varrho(X)$, $Y=\varrho(Y)$, etc, respectively, unless otherwise specified.

The structural equations of the regular connection $\nabla$ are given by:
 \begin{equation}
\tau(\hat{X},\hat{Y})=\nabla_{\hat{X}}Y-\nabla_{\hat{Y}}X-\varrho[\hat{X},\hat{Y}],
\end{equation}
\begin{equation}
\Omega(\hat{X},\hat{Y})Z=\nabla_{\hat{X}}\nabla_{\hat{Y}}Z-\nabla_{\hat{Y}}\nabla_{\hat{X}}Z
-\nabla_{[\hat{X},\hat{Y}]}Z,
\end{equation}
where $X=\varrho(\hat{X})$, $Y=\varrho(\hat{Y})$,
$Z=\varrho(\hat{Z})$ and $\hat{X}$, $\hat{Y}$ and $\hat{Y}$ are vector fields on $TM_0$. The tensors $\tau$ and $\Omega$ are called
{\it Torsion} and {\it Curvature} tensors of $\nabla$, respectively. They
determine two torsion tensors denoted here by $S$ and $T$ and three
curvature tensors denoted by $R$, $P$ and $Q$ defined by:
\[
S(X,Y)=\tau(H\hat{X},H\hat{Y}),\ \ \
T(\dot{X},Y)=\tau(V\hat{X},H\hat{Y}),
\]\[
R(X,Y)=\Omega(H\hat{X},H\hat{Y}),\ \
P(X,\dot{Y})=\Omega(H\hat{X},V\hat{Y}),\ \
Q(\dot{X},\dot{Y})=\Omega(V\hat{X},V\hat{Y}),
\]
where $X=\varrho(\hat{X})$,\ $Y=\varrho(\hat{Y})$,\
$\dot{X}=\mu(\hat{X})$ and $\dot{Y}=\mu(\hat{Y})$. The tensors $R$, $P$ and $Q$ are called $hh-$, $hv-$ and $vv-$curvature tensors, respectively. Using the
Jacobi identity for three vector fields ${\hat X}$, ${\hat Y}$ and
${\hat Z}$, one obtains {\it the Bianchi identities} for a regular
connection $\nabla$ with curvature 2-forms $\Omega$, as follows:
\begin{equation}
\label{C1}
\sigma\Omega({\hat X},{\hat Y})Z=\sigma\nabla_{{\hat Z}}\tau({\hat
X},{\hat Y})+\sigma\tau({\hat Z},[{\hat X},{\hat Y}]),
\end{equation}
\begin{equation}
\label{C2}
\sigma\nabla_{{\hat Z}}\Omega({\hat X},{\hat Y})+\sigma\Omega({\hat
Z},[{\hat X},{\hat Y}])=0,
\end{equation}
where, $\sigma$ denotes the circular permutation in the set $\{{\hat
X}, {\hat Y},{\hat Z}\}$.

Let $(x^i)$ be a local
chart with the domain $U\subseteq M$ and $(x^i,v^i)$ the induced
local coordinates on $\pi^{-1}(U)$ where
${\bf v}=v^i\frac{\partial}{\partial x^i}\in T_{\pi z}M$, where $i$ run over the range $1,2,...,n$. A {\it Finsler
structure} $F$ is defined to be a function $F$ on
$TM_0$ satisfying  the following conditions: \textbf{(1)}$ \  F>0\ \textrm{and}\ C^\infty\ \textrm{on}\ TM_0$, \textbf{(2)}$ \ F(x,\lambda v)=\lambda F(x,v),$ for every $\lambda>0$ and \textbf{(3)}$\ \ g_{ij}(x,v)=\frac{1}{2}\frac{\partial^2 F^2}{\partial
v^i\partial v^j}$ is positive definite. The pair $(M,F)$ is called a {\it Finsler manifold}.\\
 There is a unique regular
connection associated to $F$ such that:
 \begin{eqnarray}
 \nabla_{\hat{Z}}g&=&0,\nonumber\\
  S(X,Y)&=&0,\nonumber\\
  g(\tau(V\hat{X},\hat{Y}),Z)&=&g(\tau(V\hat{X},\hat{Z}),Y),\nonumber
  \end{eqnarray}
where $X=\varrho(\hat{X})$, $Y=\varrho(\hat{Y})$ and
$Z=\varrho(\hat{Z})$ for all $\hat{X}$, $ \hat{Y}$,
$\hat{Z}\in TTM_0$. The regular connection $\nabla$ is called the {\it Cartan connection}.
Given an induced natural coordinates on $\pi^{-1}(U)$, the coefficients of $\nabla$ can be written as follows:
\[
\nabla_{\partial_j}\partial_i=\Gamma^k_{\ ij}\partial_k,\ \
\nabla_{\overset{\bullet}{\partial}_j}\partial_i=C^k_{\ ij}\partial_k,
\]
where, $\partial_i=\frac{\partial}{\partial x^i},\
\overset{\bullet}{\partial}_i=\frac{\partial}{\partial v^i}$ and $\Gamma^k_{\ ij}$ and $C^k_{\ ij}$ are smooth functions defined on $\pi^{-1}(U)$. One can observe that components of the second torsion
tensor $T$ coincides with components of Cartan tensor $C$ in this coordinates, that is $T_{ijk}=\frac{1}{2}\overset{\bullet}{\partial}_kg_{ij}$, where, $T_{ijk}=g_{ir}T^r_{\ jk}$.
It can be shown that the set $\{\delta_j\}$ defined by $\delta_j=\partial_j-\Gamma^k_{\ 0j}\overset{\bullet}{\partial}_k$ form a local frame field for the horizontal space ${\cal H}TM$. Assume that $\nabla_{\delta_j}\partial_i=\overset{*}{\Gamma}{^k_{\ ij}}\partial_k$.
One can easily see that $\overset{*}{\Gamma}{^k_{\ ij}}$ is symmetric with respect to the
indices $i$ and $j$. The curvature operator $\Omega(\hat{X},\hat{Y})$ of Cartan connection is anti-symmetric in the following sense
\begin{equation}
\label{antisymmetric}
g(\Omega(\hat{X},\hat{Y})Z,W)=-g(\Omega(\hat{X},\hat{Y})W,Z),
\end{equation}
where $\hat{X},\hat{Y}\in{\cal X}(TM_0)$, $Z=\varrho(\hat{Z})$ and $W=\varrho(\hat{W})$.
The $hv-$curvature tensor $P$ and the $vv-$ curvatures tensor $Q$ of the Cartan connection $\nabla$ are
given respectively by
\begin{equation}
\label{P formula}
P^i_{\ jkl}=\nabla^iT_{jkl}-\nabla_jT^i_{\ kl}+T^i_{\
kr}\nabla_0T^r_{\ jl}-T^r_{\ kj}\nabla_0T^i_{\ rl},
\end{equation}
\begin{equation}
\label{Q formula}
Q^i_{\ jkl}=T^i_{\ rl}T^r_{\ jk}-T^i_{\ rk}T^r_{\ jl}.
\end{equation}

Among the Finsler manifolds, there are some classes determined by non-Riemannian quantities. One of them which is appeared in the present work is the $P$-symmetric Finsler manifolds required a kind of partial symmetry in the indices of $P$. This class of Finsler manifolds, was introduced by M. Matsumoto an H. Shimada in \cite{MS} and \cite{M1}, and has been extensively studied by many authors.

The curvature tensor $P^i_{\ jkl}$ can be decomposed into the sum of two symmetric and anti-symmetric tensors with respect to the indices $k$ and $l$, that is to say $P={^sP}+{^aP}$. By means of Eq.(\ref{P formula}) the symmetric tensor ${^sP}$ can be written in the following form:
\begin{equation}
\label{Ps formula}
{^sP}^i_{\ jkl}=\nabla^iT_{jkl}
+\frac{1}{2}\{T^i_{\
kr}\nabla_0T^r_{\ jl}-T^r_{\ kj}\nabla_0T^i_{\ rl}+T^i_{\
lr}\nabla_0T^r_{\ jk}-T^r_{\ lj}\nabla_0T^i_{\ rk}\}.
\end{equation}
A Finsler manifold is said to be {\it P-symmetric} if $P(X,Y)=P(Y,X),$ $\forall X,Y\in\Gamma(\pi^*TM)$.
$P$-symmetric spaces are closely related to the Finsler manifolds of isotropic sectional curvature.
 In this relation the following result is well-known.
\begin{prop} \cite{MS}
A Finsler manifold is $P$-symmetric if and only if $\nabla_{\hat{\bf v}}Q=0$.
\end{prop}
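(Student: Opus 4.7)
The plan is to reduce the claim to a direct index computation showing that the antisymmetric part of $P^i_{\ jkl}$ in the indices $k,l$ equals (up to a sign and factor) $\nabla_0 Q^i_{\ jkl}$. The criterion for $P$-symmetry is exactly ${}^aP=0$, so once this identity is in place the biconditional is immediate.

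First I would set up the algebraic equivalence: since $P(X,Y)=P(Y,X)$ for all sections translates at the component level to $P^i_{\ jkl}=P^i_{\ jlk}$, being $P$-symmetric is the same as the vanishing of the antisymmetric-in-$(k,l)$ part ${}^aP^i_{\ jkl}:=\tfrac{1}{2}(P^i_{\ jkl}-P^i_{\ jlk})$. Next, using formula (\ref{P formula}) together with the total symmetry of the Cartan tensor $T_{jkl}$ (so that $\nabla^{i}T_{jkl}$ and $\nabla_{j}T^{i}_{\ kl}$ are automatically $k\leftrightarrow l$ symmetric), I discard those two contributions and record that
\begin{equation*}
2\,{}^aP^i_{\ jkl}=\bigl(T^i_{\ kr}\nabla_0 T^r_{\ jl}-T^i_{\ lr}\nabla_0 T^r_{\ jk}\bigr)-\bigl(T^r_{\ kj}\nabla_0 T^i_{\ rl}-T^r_{\ lj}\nabla_0 T^i_{\ rk}\bigr).
\end{equation*}

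The second step is to differentiate (\ref{Q formula}) along the canonical section. Applying $\nabla_0$ via the Leibniz rule to $Q^i_{\ jkl}=T^i_{\ rl}T^r_{\ jk}-T^i_{\ rk}T^r_{\ jl}$ and using $T^i_{\ ab}=T^i_{\ ba}$, I obtain four terms which I group as
\begin{equation*}
\nabla_{0}Q^i_{\ jkl}=\bigl(T^i_{\ rl}\nabla_0 T^r_{\ jk}-T^i_{\ rk}\nabla_0 T^r_{\ jl}\bigr)+\bigl(T^r_{\ jk}\nabla_0 T^i_{\ rl}-T^r_{\ jl}\nabla_0 T^i_{\ rk}\bigr).
\end{equation*}
Comparing the two displayed formulas (using once more the symmetry $T^i_{\ rk}=T^i_{\ kr}$, $T^r_{\ jk}=T^r_{\ kj}$), every term on the right of the first formula appears on the right of the second with opposite sign, so the crucial identity
\begin{equation*}
2\,{}^aP^i_{\ jkl}=-\nabla_{0}Q^i_{\ jkl}
\end{equation*}
falls out.

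From this identity the proposition is immediate: $P$-symmetry is ${}^aP\equiv 0$, which is equivalent to $\nabla_0 Q\equiv 0$, i.e.\ $\nabla_{\hat{\bf v}}Q=0$ since $\nabla_0$ is exactly the horizontal covariant derivative along the canonical section ${\bf v}$. The only real obstacle is bookkeeping: one must be careful to use the total symmetry of the Cartan tensor at the right moments and to match indices consistently between (\ref{P formula}), (\ref{Q formula}), and (\ref{Ps formula}); no geometric input beyond these identities and the Leibniz rule for $\nabla$ is needed.
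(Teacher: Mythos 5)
The paper does not prove this proposition at all---it is quoted with a citation to Matsumoto--Shimada---so there is no internal proof to compare against. Your argument is correct and self-contained given the paper's displayed formulas: the two derivative terms $\nabla^iT_{jkl}$ and $\nabla_jT^i_{\ kl}$ in (\ref{P formula}) are indeed $k\leftrightarrow l$ symmetric by the total symmetry of $T$, the Leibniz expansion of $\nabla_0$ applied to (\ref{Q formula}) is licit because the Cartan connection is metric ($\nabla g=0$, so indices raise and lower through $\nabla_0$), and the term-by-term match giving $2\,{}^aP^i_{\ jkl}=-\nabla_0Q^i_{\ jkl}$ checks out, from which the biconditional is immediate. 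One incidental remark: your computation of ${}^aP$ directly as $\tfrac12(P^i_{\ jkl}-P^i_{\ jlk})$ is the right move and quietly sidesteps the fact that the paper's own formula (\ref{Ps formula}) for ${}^sP$ appears to have dropped the ($k,l$-symmetric) term $-\nabla_jT^i_{\ kl}$; that omission does not affect your identity.
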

Next we consider the
{\it Berwald connection} $D$ which
is not metric-compatible but a torsion free regular connection relative to $F$. There is the following
relation between the connections $\nabla$ and $D$
\begin{equation}
\label{BerCarrel}
D_{H\hat{X}}Y=\nabla_{H\hat{X}}Y+(\nabla_{\hat{{\bf v}}}T)(X,Y),\ \ \ \ D_{V\hat{X}}Y=(V\hat{X}.Y^i)\partial_i\,
\end{equation}
where the vector field $\hat{{\bf v}}=v^i\delta_i$ is the canonical geodesic spray of
$F$. If we assume
$D_{\delta_j}\partial_i=G^k_{\ ij}\partial_k,\ \
$
then Eqs.(\ref{BerCarrel}) can be written in the following local form:
\[
G^i_{\ jk}=\overset{*}{\Gamma}{^i_{\ jk}}+\nabla_0T^i_{\ jk},\ \ \ D_{\overset{\bullet}{\partial}_j}\partial_i=0.
\]
It is clear from Eq.(\ref{BerCarrel}) that the
connections $D$ and $\nabla$ associate to the same geodesic spray, since we have $\nabla_{\hat{X}}{\bf v}=D_{\hat{X}}{\bf v}$. The metric tensor $g$ related to the Finsler structure $F$ is parallel along any geodesic of Berwald connection, that is equivalent to $D_{\hat{\bf v}}g=0$.
The Berwald connection $D$ admits the $hh-$ curvature tensors $H$ and the $hv-$ curvature tensors $G$  with the components $H^i_{\
jkl}$ and $G^i_{\ jkl}$.
$G^i_{\ jkl}$ and $H^i_{\ jkl}$ can be determined by
\[
G^i_{\ jkl}=\overset{\bullet}{\partial}_l\overset{\bullet}{\partial}_k\overset{\bullet}{\partial}_jG^i=\overset{\bullet}{\partial}_lG^i_{\ jk},
\]\[
H^i_{\ jkl}=(\delta_kG^i_{\ jl}-G^i_{\ ljs}G^s_{\ k})-(\delta_lG^i_{\ jk}-G^i_{\ kjs}G^s_{\ l})
+G^i_{\ rk}G^i_{\ jl}-G^i_{\ rl}G^i_{\ jk}.
\]
Let $z\in TM_0$ and ${\cal P}({\bf v},X)\subseteq T_{\pi z}M$ be a plane,
generated by ${\bf v}$ and a linearly independent vector $X$ in
$T_{\pi z}M$. The {\it flag curvature} at the point $z\in TM_0$ with
respect to ${\cal P}({\bf v},X)$ is denoted by ${\bf K}(z,{\cal P}({\bf v},X))$
and is defined as follows:
\[
{\bf K}(z,{\cal P}({\bf v},X))=\frac{g(R(X,{\bf v}){\bf
v},X)}{g(X,X)F^2-g(X,{\bf v})^2},
\]
where, $R$ denotes the $hh-$curvature of Cartan connection \cite{S}.
Note that, the flag curvature ${\bf K}(z,{
\cal P}({\bf v},X))$ does not depend on the choice of
Berwald and Cartan connection, since, after a simple
calculation. In fact, one can easily show that
\begin{equation}
\label{HR}
H(X,{\bf v}){\bf v}=R(X,{\bf v}){\bf v}.
\end{equation}
The Finsler manifold $(M,F)$ is said to be of {\it scalar flag curvature at the point} $z\in TM_0$ if ${\bf
K}(z,P({\bf v},X))$ does not depend on the choice of the plane ${\cal P}({\bf v},X)$
and it is said to be of {\it scalar flag curvature} if it is
of scalar flag curvature at all points $z\in TM_0$. In this case we have:
\[
R(X,{\bf v}){\bf v}={\bf K}(z)\{F^2X-g(X,{\bf v}){\bf v}\},\ \ \ \forall X\in\Gamma(\pi^*TM).
\]
\subsection{Finsler submanifolds.}
Let $S$ be a k-dimensional embedded submanifold of the Finsler manifold $(M,F)$ defined by
embedding ${\bf i}:S\longrightarrow M$. We identify a point $\tilde{x}\in S$ and a tangent vector $\widetilde{X}\in T_{\tilde{x}}S$ by
by ${\bf i}(\tilde{x})$ and ${\bf i}_*\widetilde{X}$, respectively. Hence, $T_{\tilde{x}}S$ can be considered as a subspace
of $T_{\tilde{x}}M$. The embedding ${\bf i}$ induces a map $\tilde{{\bf i}}={\bf i}_*:TS_0\longrightarrow TM_0$. If we
identify a point $\widetilde{z}\in TS_0$ with its image $\tilde{{\bf i}}(\widetilde{z})$, then $TS_0$ can be considered as a
sub-fiber bundle of $TM_0$. Restricting the map $\pi:TM_0\longrightarrow M$ to $TS_0$, we obtain the mapping $q:TS_0\longrightarrow M$. Denote by $\hat{T}S={\bf i}^*TM$, the pulled back bundle of $TM$.
The Finsler metric $g$ on $M$ induces a Finsler metric on $S$ which is denoted  by $\widetilde{g}$. Given any point
$\widetilde{x}=q(\widetilde{z})\in S$, where $\widetilde{z}\in TS_0$, we denote by $N_{q(\widetilde{z})}$ the orthogonal complementary subspace of $T_{q(\widetilde{z})}M$
in $\hat{T}_{q(\widetilde{z})}S$. Therefore we have the Whitney sum
\begin{equation}
\label{decompo}
\hat{T}_{q(\tilde{z})}S=T_{q(\tilde{z})}S\oplus N_{q(\tilde{z})}.
\end{equation}
The above decomposition defines the two projection maps ${\bf P}_1$ and ${\bf P}_2$ as follows:
\[
{\bf P}_1:\hat{T}S\longrightarrow TS,
\]\[
{\bf P}_2:\hat{T}S\longrightarrow N,
\]
where $N=\bigcup_{\tilde{z}\in TS_0} N_{q(\tilde{z})}$.
We have $q^*\hat{T}S=q^*TS\oplus N$. $N$ is called {\it the normal fiber bundle}. We denote by $\rho$ the canonical linear mapping $TTS_0\longrightarrow q^*TS$, that is,
 $\rho=q_*$. Let $\widetilde{X}$ and $\widetilde{Y}$ be two vector fields on $TS_0$. Given $\tilde{z}\in TS_0$,
 $(\nabla_{\widetilde{X}}Y)_{\tilde{z}}$ belongs to $\hat{T}_{q(\tilde{z})}S$. Therefore, using the decomposition (\ref{decompo}), we get
\begin{equation}
\label{second fundamental form}
\nabla_{\widetilde{X}}Y=\widetilde{\nabla}_{\widetilde{X}}Y+\alpha(\widetilde{X},Y),
\end{equation}
where, $\nabla$ is the Cartan connection, $Y=\rho(\widetilde{Y})$, $\widetilde{\nabla}_{\widetilde{X}}Y\in T_{q(\tilde{z})}S$ and $\alpha(\widetilde{X},Y)\in N_{q(\tilde{z})}$. $\alpha$ is called {\it the second fundamental form of $S$}.
From Eq.(\ref{second fundamental form}), it follows that, $\widetilde{\nabla}$ is a covariant derivative in the
vector bundle $q^*TS\longrightarrow TS_0$ and satisfies $\widetilde{\nabla}\widetilde{g}=0$. $\widetilde{\nabla}$ is called the {\it tangential covariant derivation}. $\alpha(\widetilde{X},\rho(\widetilde{Y}))$
is a bilinear form possessing its values in $N$. Let us denote by $\widetilde{\tau}$ the torsion
tensor of $\widetilde{\nabla}$. Then, we have
\[
{\bf P}_1\tau(\widetilde{X},\widetilde{Y})=\widetilde{\tau}(\widetilde{X},\widetilde{Y})
=\widetilde{\nabla}_{\widetilde{X}}Y-\widetilde{\nabla}_{\widetilde{Y}}X-\rho[\widetilde{X},\widetilde{Y}],
\]\[
{\bf P}_2\tau(\widetilde{X},\widetilde{Y})=\alpha(\widetilde{X},Y)-\alpha(\widetilde{Y},X),
\]
where $X=\rho(\widetilde{X})$ and $Y=\rho(\widetilde{Y})$.
The submanifold $S$ is said to be {\it totally geodesic} at a point $\tilde{x}\in S$ if, for every tangent vector $\widetilde{X}\in T_{\tilde{x}}S$, the geodesic $\gamma(t)$ of $M$ in the direction of $\widetilde{X}$ lies in $S$ for small values of the parameter $t$. If $S$ is totally geodesic at every point of $S$, it is called a {\it totally geodesic submanifold} of $M$.
\setcounter{alphthm}{0}
\begin{thm}
\cite{A1} Let $S$ be a submanifold of the Finsler manifold $(M,F)$ with the second fundamental form $\alpha$. Then, $S$ is a totally geodesic submanifold if and only if $\alpha(\widetilde{X},{\bf v})=0$, for all $\widetilde{X}\in{\cal X}(TS_0)$.
\end{thm}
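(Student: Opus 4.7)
The plan is to use the Gauss-type decomposition $\nabla_{\widetilde{X}}Y = \widetilde{\nabla}_{\widetilde{X}}Y + \alpha(\widetilde{X},Y)$ together with the Finsler geodesic equation $\nabla_{\hat{\mathbf{v}}}\mathbf{v}=0$. For the sufficiency direction, suppose $\alpha(\widetilde{X},\mathbf{v})=0$ for every $\widetilde{X}$; specialising to $\widetilde{X}=\hat{\mathbf{v}}$ (interpreted as the canonical spray $v^a\widetilde{\delta}_a$ of the induced structure on $S$) gives $\alpha(\hat{\mathbf{v}},\mathbf{v})=0$ throughout $TS_0$. Given $v\in T_{\tilde{x}}S$, I would solve the ODE $\widetilde{\nabla}_{\hat{\mathbf{v}}}\mathbf{v}=0$ on $TS_0$ with this initial velocity to obtain a curve $\tilde\gamma\subset S$. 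The Gauss formula along the canonical lift of $\tilde\gamma$ then yields $\nabla_{\hat{\mathbf{v}}}\mathbf{v}=\widetilde{\nabla}_{\hat{\mathbf{v}}}\mathbf{v}+\alpha(\hat{\mathbf{v}},\mathbf{v})=0$, so $\tilde\gamma$ is an $M$-geodesic with the prescribed initial data. By uniqueness of $M$-geodesics this equals the ambient geodesic through $(\tilde{x},v)$, which therefore lies in $S$, showing that $S$ is totally geodesic.

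Conversely, assume $S$ is totally geodesic. For each $(\tilde{x},v)\in TS_0$ the $M$-geodesic $\gamma$ with $\dot\gamma(0)=v$ lies in $S$, and its canonical lift in $TS_0$ satisfies $\nabla_{\hat{\mathbf{v}}}\mathbf{v}=0$. Decomposing this equation via Gauss according to $\hat{T}S=TS\oplus N$ and projecting onto the normal summand yields $\alpha(\hat{\mathbf{v}},\mathbf{v})=0$ at $(\tilde{x},v)$; since the pair was arbitrary, this holds on all of $TS_0$. To upgrade to $\alpha(\widetilde{X},\mathbf{v})=0$ for every $\widetilde{X}\in\mathcal{X}(TS_0)$, I would split $\widetilde{X}=H\widetilde{X}+V\widetilde{X}$. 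The vertical part is automatic: $\alpha(V\widetilde{X},\mathbf{v})=\mu(V\widetilde{X})-\widetilde{\mu}(V\widetilde{X})=0$, because the canonical vertical isomorphism $\mu$ restricted to $\mathcal{V}_{\widetilde{z}}TS_0$ lands in $T_{q(\widetilde{z})}S$ and agrees there with $\widetilde{\mu}$. For the horizontal part, I would polarise the quadratic-in-$v$ identity $v^av^b\alpha_{(\tilde{x},v)}(\widetilde{\delta}_a,\widetilde{\partial}_b)=0$, using the positive homogeneity in $v$ of the induced Cartan-connection coefficients, to conclude $\alpha(H\widetilde{X},\mathbf{v})=0$.

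The main obstacle is precisely this horizontal-polarisation step in the forward direction: the geodesic argument provides only the contracted symmetric vanishing $v^av^b\alpha_{(\tilde{x},v)}(\widetilde{\delta}_a,\widetilde{\partial}_b)=0$, and since the coefficients themselves depend on $v$, naive polarisation is unavailable. One must invoke Euler's identity applied to the appropriately homogeneous Cartan coefficients to promote this to $v^b\alpha(\widetilde{\delta}_a,\widetilde{\partial}_b)=0$, and then exploit tensoriality of $\alpha$ in its first slot to obtain the full statement. By contrast, the vertical sub-case and the entire backward direction follow immediately from the Gauss formula and uniqueness of Finsler geodesics.
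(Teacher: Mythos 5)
The paper does not prove this statement: it is quoted from Akbar-Zadeh's book \cite{A1} with no argument supplied, so there is no internal proof to compare yours with, and I can only judge the proposal on its own merits.

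Your architecture is right, and the sufficiency direction is complete: solving $\widetilde{\nabla}_{\dot{\hat{\gamma}}}{\bf v}=0$ produces a curve inside $S$ which, by the Gauss formula and the hypothesis $\alpha(\cdot,{\bf v})=0$, also satisfies $\nabla_{\dot{\hat{\gamma}}}{\bf v}=0$, and uniqueness of ambient geodesics finishes it. The vertical sub-case of the converse is likewise fine, since for the Cartan connection $\mu(\overset{\bullet}{\partial}_a)=\partial_a+v^iC^k_{\ ia}\partial_k=\partial_a\in T_{q(\tilde{z})}S$ for tangential indices $a$. The gap is exactly where you flag it, and the repair you name does not quite work as stated: Euler's theorem applied to the degree-two function $f(v)=v^av^b\alpha_{ab}(x,v)$ only returns $v^c\overset{\bullet}{\partial}_cf=2f=0$, which is vacuous, while a bare fiber derivative of the identity $f\equiv0$ leaves the uncontrolled remainder $v^av^b\overset{\bullet}{\partial}_c\alpha_{ab}$ sitting next to the term $2v^b\alpha_{cb}$ you want. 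The step does close, but one should route it through the spray rather than through the components of $\alpha$: in coordinates adapted to $S$, the totally geodesic condition is equivalent to the vanishing of the normal components of the spray coefficients $G^k=\frac{1}{2}\Gamma^k_{\ 00}$ identically on the open cone $T_xS\setminus\{0\}$ for every $x\in S$; because $T_xS$ is a linear subspace, this identity may be differentiated in the tangential fiber directions $\overset{\bullet}{\partial}_a$, and the deflection-free property of the Cartan connection, $\overset{\bullet}{\partial}_aG^k=\Gamma^k_{\ 0a}=v^i\Gamma^k_{\ ia}$, converts the result into the vanishing of the normal part of $\nabla_{\partial_a}{\bf v}$ --- precisely the single contraction $\alpha(\widetilde{X},{\bf v})=0$ for all $\widetilde{X}$, by tensoriality of $\alpha$ in its first slot. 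With that substitution your outline becomes a correct proof.
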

The submanifold $S$ is also said to be an auto-parallel submanifold of $M$ if the second fundamental form $\alpha$
vanishes identically. Note that, in the Riemannian manifolds, the concepts of auto-parallel and totally geodesic submanifolds coincide. Clearly, every auto-parallel submanifold is also totally geodesic. Notice that, on an auto-parallel submanifold $S$, the induced connection $\widetilde{\nabla}$ coincides with the Cartan connection of the induced Finsler structure $\widetilde{F}=\tilde{{\bf i}}^*F$.\\

\subsection{Nullity space of curvature operator in Finsler geometry.}
Let $(M,F)$ be a Finsler manifold and $\nabla$ the Cartan connection related to $F$.
 Given any point $z\in TM_0$, consider the subspace of ${\cal H}_zTM$ defined by:
\[
N_z:=\{\hat{X}\in {\cal H}_zTM|\ \ \Omega(\hat{X},\hat{Y})=0,\ \ \forall
\hat{Y}\in {\cal H}_zTM\},
\]
where, $\Omega$ is the curvature operator of $\nabla$. For any point
$z\in TM_0$ where $\pi z=x$. The subspace ${\cal N}_x=\varrho(N_z)\subset
T_xM$ is linearly isomorphic
to $N_z$. ${\cal N}_x$ is called the {\it nullity space} of the curvature operator on the Finsler manifold $(M,F)$ at the point $x\in M$, while ${\cal N}$ will denote the {\it field of nullity spaces}. Its
orthogonal complementary space in $T_xM$ is called the {\it co-nullity space at $x$} and is denoted by
$\overset{\perp}{{\cal N}}_x$. Every
element of ${\cal N}_x$ is called a {\it nullity
vector}. The non-negative integer valued function $\mu_0:M\longrightarrow I\!{N}$ defined by $\mu_0(p)=\dim{\cal N}_p$ is called {\it the index of nullity} and $\mu_0(p)$ is called the index of nullity at the point $p\in M$. Nullity space is called {\it locally constant} if given
any $x\in M$, there is a neighborhood $U$ of $x$ such that the function
$\mu_0$ is constant on $U$. In this case, the correspondence $x\in
U\mapsto {\cal N}_x$ is a distribution called the
{\it nullity distribution} on $U$. In the sequel we assume $0<\mu_0<n$ unless otherwise specified.

Let $\ker_x \Omega$ be the kernel of the operator $\Omega$,
that is
\begin{equation}
\label{nullity}
\ker_x \Omega=\{Z\in T_xM|\ \Omega(\hat{X},\hat{Y})Z=0,\ \forall
\hat{X},\hat{Y}\in {\cal H}_zTM\}.
\end{equation}
Akbar-Zadeh proved that, ${\cal N}_x=\ker_x
\Omega$ and moreover, if the nullity space is locally constant on $U$, then the nullity distribution on $U$
is completely integrable. This is an extension of the similar result in Riemannian manifolds, established by Maltz \cite{Mz} and Gray \cite{Gr}.
Akbar-Zadeh proved the following result:
\begin{thm}
Let $(M,F)$ be a complete Finsler manifold and $G$ an open subset in $M$ on which $\mu_0$ is minimum. Then, every
nullity manifold is a geodesically complete submanifold of
$M$.
\end{thm}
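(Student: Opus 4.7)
The plan is to extend any $\tilde F$-geodesic of a maximal integral leaf $N\subset G$ via completeness of $(M,F)$ and then verify that the extension cannot leave $N$ in finite parameter. First I would observe that $\mu_0(x)=\dim\ker_x\Omega$ is upper semi-continuous on $M$ (the rank of a continuous family of linear maps is lower semi-continuous), so the set $G$ on which $\mu_0=\mu_0^{\min}$ is open, and there ${\cal N}$ is a smooth constant-rank distribution. The integrability result of Akbar-Zadeh (the nullity predecessor of Theorem~\ref{mainthm1}) makes ${\cal N}$ completely integrable on $G$, and the Finslerian analogue of Ferus's theorem (the nullity predecessor of Theorem~\ref{lema}) gives that every maximal integral leaf $N$ is auto-parallel, so that the induced Cartan connection $\tilde\nabla$ coincides with the restriction of $\nabla$ and every $\tilde F$-geodesic of $(N,\tilde F)$ is also an $F$-geodesic of $(M,F)$.

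Let $\tilde\gamma:[0,T)\to N$ be a maximal $(N,\tilde F)$-geodesic with initial velocity $v\in{\cal N}_p$. By the auto-parallel step and completeness of $(M,F)$, it extends to an $F$-geodesic $\gamma:\mathbb R\to M$. I would then analyse
\[
I=\{\,t\in\mathbb R\ :\ \gamma(t)\in G\ \text{and}\ \dot\gamma(t)\in{\cal N}_{\gamma(t)}\,\},
\]
which contains $[0,T)$. Openness is easy: if $t_0\in I$, then $G$ is open near $\gamma(t_0)$ and uniqueness of $F$-geodesics with given initial data forces $\gamma$ to coincide with the leaf geodesic through $\gamma(t_0)$ in a neighbourhood of $t_0$, hence nearby $t$ lie in $I$. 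For closedness, take $t_n\uparrow t_\ast$ with $t_n\in I$. Continuity of $\Omega$ immediately gives $\dot\gamma(t_\ast)\in{\cal N}_{\gamma(t_\ast)}$, but the decisive point is to rule out $\mu_0(\gamma(t_\ast))>\mu_0^{\min}$.

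To close this gap I would parallel-transport a basis of ${\cal N}_{\gamma(0)}$ along $\gamma$ with respect to the Cartan connection and apply the second Bianchi identity~(\ref{C2}) with $\dot\gamma$, itself a nullity direction, inserted as one of its arguments. Because $\Omega(\cdot,\dot\gamma)\equiv 0$ and $\Omega(\dot\gamma,\cdot)\equiv 0$ while $t<t_\ast$, most terms of (\ref{C2}) collapse, reducing the identity to a first-order ODE whose coefficients vanish on nullity vectors; this forces each transported vector to remain in ${\cal N}_{\gamma(t)}$ on $[0,t_\ast)$. Passing to the limit yields $\mu_0^{\min}$ linearly independent nullity vectors at $\gamma(t_\ast)$, and combined with the minimality of $\mu_0^{\min}$ on $G$ this forces $\mu_0(\gamma(t_\ast))=\mu_0^{\min}$, so that $t_\ast\in I$. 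Hence $I=\mathbb R$, and since leaves through a given point are unique we conclude $\gamma(\mathbb R)\subset N$, proving geodesic completeness of $N$.

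The main obstacle is precisely the Bianchi-transport step. In the Finsler setting (\ref{C2}) carries mixed horizontal and vertical curvature contributions together with the torsion $\tau$, so the vanishing that is essentially automatic in Riemannian geometry must here be verified by careful use of the antisymmetry~(\ref{antisymmetric}), the Cartan-specific identities relating $\nabla$, $\tau$ and $\Omega$, and the equality ${\cal N}_x=\ker_x\Omega$ to guarantee that the extra non-Riemannian terms vanish when a nullity vector is inserted. Once this linear-algebraic bookkeeping is in place, the topological argument for $I=\mathbb R$ and the final uniqueness-of-leaves conclusion proceed as in the Riemannian prototype.
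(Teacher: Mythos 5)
Your overall skeleton (leaf geodesic $\to$ ambient geodesic via auto-parallelism and completeness of $(M,F)$, then an open--closed argument for the set $I$) is reasonable, and the tools you name --- a Cartan-parallel frame along the geodesic and the second Bianchi identity collapsing to a first-order linear ODE because $\dot\gamma$ is a nullity direction --- are exactly the ones the argument needs. But the decisive closedness step is run in the wrong direction and, as written, proves nothing. Parallel-transporting a basis of ${\cal N}_{\gamma(0)}$ forward and showing it stays in ${\cal N}_{\gamma(t)}$ only yields $\mu_0(\gamma(t_*))\ge\mu_0^{\min}$ at the limit point; that inequality holds at \emph{every} point of $M$ by the definition of the minimum, so it cannot ``force $\mu_0(\gamma(t_*))=\mu_0^{\min}$.'' The danger you must exclude is precisely an upward jump of $\mu_0$ at $\gamma(t_*)$ (as you note, $\mu_0$ is upper semi-continuous, so the kernel can only get \emph{larger} in the limit), and a lower bound on $\mu_0$ does not touch that.

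The correct use of your ODE is backward, by contradiction, which is how the paper argues for the ${\bf k}$-nullity version (Theorem \ref{mainthm2}); the present nullity statement, quoted from Akbar-Zadeh, follows from the same scheme with $\Omega$ in place of $\bar\Omega$. Suppose the leaf geodesic cannot be extended, so that $p=\tilde\gamma(c)\notin G$ and $\mu_0(p)>\mu_0^{\min}$. Then some Cartan-parallel field ${\bf E}_a$ along $\tilde\gamma$ satisfies ${\bf E}_a(t)\in\overset{\perp}{{\cal N}}_{\gamma(t)}$ for $t<c$ but ${\bf E}_a(c)\in{\cal N}_p$. Setting $f_{ija}=\Omega(\hat{{\bf E}}_i,\hat{{\bf E}}_j){\bf E}_a$ as in Eq.(\ref{f}), the Bianchi identity (Lemma \ref{lemi}(2), with the vertical contributions cancelling exactly because $\dot{\tilde\gamma}$ is a nullity direction) gives the homogeneous linear system $f^\prime_{ija}+{\bf W}^k_if_{jka}-{\bf W}^k_jf_{ika}=0$ of Eq.(\ref{FinFin}). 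The terminal condition $f_{ija}(c)=0$ then forces $f\equiv 0$ on an interval to the left of $c$ by uniqueness for linear ODEs, i.e.\ ${\bf E}_a(t)$ was already a nullity vector for $t<c$, contradicting ${\bf E}_a(t)\in\overset{\perp}{{\cal N}}_{\gamma(t)}$. This backward propagation from the putative jump point is the missing idea; your forward transport establishes only that the nullity distribution is parallel along its own geodesics, which is true but insufficient here.
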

\section{{\bf k}-Nullity space of Cartan connection's curvature operator.}
Let $(M,F)$ be an n-dimensional Finsler manifold endowed with the Cartan connection $\nabla$. The aim of this section is to associate to $(M,F)$ a {\bf k}-nullity space of the Cartan connection's curvature operator. We first introduce the concept of {\bf k}-nullity space as a natural extension of nullity space in Finsler geometry containing nullity space as a special case ${\bf k}=0$. Furthermore, we study fundamental properties of {\bf k}-nullity spaces. Given any non-negative real number {\bf k}, we define the tensors $\eta^{\bf k}$ and $\bar{\Omega}$ as follows
\[
\eta^{\bf k}(\hat{X},\hat{Y})Z={\bf k}\{g(Y,Z)X-g(X,Z)Y\}+{^aP}(X,\dot{Y})Z,
\]
\begin{equation}
\label{def bar omega}
\bar{\Omega}(\hat{X},\hat{Y})Z=\Omega(\hat{X},\hat{Y})Z-\eta^{\bf
k}(\hat{X},\hat{Y})Z,
\end{equation}
where, $\hat{X},\hat{Y},\hat{Z}\in {\cal X}(TM_0)$, $X=\varrho(\hat{X})$, $Y=\varrho(\hat{Y})$, $Z=\varrho(\hat{Z})$ and ${^aP}$ is the anti-symmetric part of $hv-$curvature tensor $P(X,Y)$.
We refer to $\bar{\Omega}$ as the related curvature operator of $\Omega$. The local representation of $\bar{\Omega}(H\hat{X},H\hat{Y})$ is given by $$\bar{\Omega}^i_{\ jkl}=R^i_{\
jkl}-{\bf k}\{g_{jk}\delta^i_{\ l}-g_{jl}\delta^i_{\ k}\},$$ and we have from Eq.(\ref{def bar omega})
\begin{equation}
\label{S0}
\bar{\Omega}(H\hat{X},V\hat{Y})={^sP}(X,\dot{Y}),
\end{equation}
where, $\dot{Y}=\mu(\hat{Y})$.
Notice that, Eq.(\ref{Ps formula}) yields
\begin{equation}
\label{S}
\bar{\Omega}(H\hat{X},V\hat{Y}){\bf v}={^sP}(X,\dot{Y}){\bf v}=0,
\end{equation}
where ${\bf v}$ is the canonical section of $\pi^*TM$ given by ${\bf v}=v^i\pa_i$. Given any point $z\in TM_0$, we define the subspace $N^{\bf k}_z$ of ${\cal H}_zTM$ by
\[
N^{{\bf k}}_z:=\{\hat{X}\in {\cal H}_zTM|\ \bar{\Omega}(\hat{X},\hat{Y})=0,\ \ \ \forall
\hat{Y}\in {\cal H}_zTM\}.
\]
For any point
$z\in TM_0$ and $\pi z=x$, we consider the subspace ${\cal N}^{{\bf k}}_x=\varrho(N^{{\bf k}}_z)\subset
T_xM$. Clearly, the subspace ${\cal N}^{\bf k}_x=\varrho(N^{\bf k}_z)\subset
T_xM$ is linearly isomorphic to $N^{\bf k}_z$, since $\varrho$ is a linear isomorphism between ${\cal H}TM$ and $\pi^*TM$.

Now, we are in position to define a non-Riemannian {\bf k}-nullity space on Finsler manifolds.
\setcounter{alphthm}{0}
\begin{Def}
Let $(M,F)$ be a Finsler manifold. ${\cal N}^{\bf k}_x$ is called the {\bf k}-{\it nullity space} of the curvature operator on the Finsler manifold $(M,F)$ at the point $x\in M$, while ${\cal N}^{\bf k}$ will denote the {\it field of {\bf k}-nullity spaces}. Its
orthogonal complementary space in $T_xM$ is denoted by
${\scriptstyle \overset{\perp}{{\cal N}}{^{\bf k}_x}}$. Every
element of ${\cal N}^{\bf k}_x$ is called a {\it {\bf k}-nullity
vector}. The non-negative integer valued function $\mu_{\bf k}:M\longrightarrow I\!{N}$ defined by $\mu_{\bf k}(p)=\dim{\cal N}^{\bf k}_p$ is called the index of {\bf k}-nullity at the point $p\in M$. {\bf k}-nullity space is called {\it locally constant} if given
any $x\in M$, there is a neighborhood $U$ of $x$ such that the function
$\mu_{\bf k}$ is constant on $U$. In this case, the correspondence $x\in
U\mapsto {\cal N}^{\bf k}_x$ is a distribution called the
{\it {\bf k}-nullity distribution} on $U$.
\end{Def}
The function $\mu_{\bf k}:M\longrightarrow I\!\!{N}$ is upper semi-continuous. In the sequel we assume that $0<\mu_{\bf k}<n$ unless otherwise specified.\\
Observe that, the following relations hold for $\eta^{\bf k}$:
\begin{equation}
\label{etar1}
\sigma\eta^{\bf k}({\hat X},{\hat Y})Z=0,\ \ \  \nabla_{\hat{Z}}\eta^{\bf k}=0,\ \ \ \forall\hat{X},\hat{Y},\hat{Z}\in {\cal H}TM.
\end{equation}
where, $\sigma$ is a circular permutation on the set $\{\hat{X},\hat{Y},\hat{Z}\}$. Thus, it is clear that we have:
\begin{equation}
\label{etar2}
\sigma\bar{\Omega}({\hat X},{\hat Y})Z=\sigma\Omega({\hat X},{\hat Y})Z,\ \ \ \forall\hat{X},\hat{Y},\hat{Z}\in {\cal H}TM.
\end{equation}
The tensor $\bar{\Omega}$ has somehow the same algebraic properties as $\Omega$. The following properties of $\bar{\Omega}$ are easily verified:
\begin{lem}
\label{lemi}
The following statements hold for $\bar{\Omega}$:\\
$(1)\ \  \sigma\bar{\Omega}({\hat X},{\hat Y})Z=\sigma\nabla_{{\hat
Z}}\tau({\hat X},{\hat Y})+\sigma\tau({\hat Z},[{\hat X},{\hat
Y}]),$\\
$(2)\ \  \sigma\nabla_{{\hat Z}}\bar{\Omega}({\hat X},{\hat
Y})+\sigma\bar{\Omega}({\hat Z},[{\hat X},{\hat Y}])=0,$\\
$(3)\ \  g(\bar{\Omega}(\hat{X},\hat{Y})Z,W)=-g(\bar{\Omega}(\hat{X},\hat{Y})W,Z)$,
where, $\hat{X},\hat{Y},\hat{Z},\hat{W}\in {\cal H}TM$ and $\sigma$ is a
circular permutation in the set $\{\hat{X},\hat{Y},\hat{Z}\}$.
\end{lem}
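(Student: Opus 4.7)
My plan is to derive each of the three identities for $\bar{\Omega}=\Omega-\eta^{\bf k}$ from the corresponding identity already recorded for $\Omega$, using the structural properties of $\eta^{\bf k}$ collected in equations (\ref{etar1}) and (\ref{etar2}). The proofs of (1) and (3) are essentially immediate; (2) is the step where some real work is hidden.

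For (1), I start from the first Bianchi identity (\ref{C1}) applied to $\Omega$, substitute $\Omega=\bar{\Omega}+\eta^{\bf k}$, and move the cyclic sum $\sigma\,\eta^{\bf k}(\hat{X},\hat{Y})Z$ to the right. But this term is zero by the first half of (\ref{etar1}) (equivalently, by (\ref{etar2})), so what remains is exactly $\sigma\bar{\Omega}(\hat{X},\hat{Y})Z$ on the left and $\sigma\nabla_{\hat{Z}}\tau(\hat{X},\hat{Y})+\sigma\tau(\hat{Z},[\hat{X},\hat{Y}])$ on the right. For (3), I check antisymmetry piece by piece. For $\Omega$ it is (\ref{antisymmetric}). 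For the metric piece ${\bf k}\{g(Y,Z)X-g(X,Z)Y\}$ of $\eta^{\bf k}$, pairing against $g(\,\cdot\,,W)$ gives ${\bf k}\{g(Y,Z)g(X,W)-g(X,Z)g(Y,W)\}$, which is manifestly skew in $Z\leftrightarrow W$. The remaining piece is $g({^aP}(X,\dot{Y})Z,W)$, and $(Z,W)$-skew symmetry here is inherited from (\ref{antisymmetric}) applied to $P(X,\dot{Y})=\Omega(H\hat{X},V\hat{Y})$, and hence to its symmetric/antisymmetric parts separately, since the decomposition $P={^sP}+{^aP}$ is performed in the $(X,\dot{Y})$ slot, not in the $(Z,W)$ slot. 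Adding up, $\bar{\Omega}$ inherits $(Z,W)$-antisymmetry.

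For (2), I substitute $\Omega=\bar{\Omega}+\eta^{\bf k}$ into (\ref{C2}). This produces, in addition to $\sigma\nabla_{\hat{Z}}\bar{\Omega}(\hat{X},\hat{Y})+\sigma\bar{\Omega}(\hat{Z},[\hat{X},\hat{Y}])$, two extra cyclic sums that I must show cancel: $\sigma(\nabla_{\hat{Z}}\eta^{\bf k})(\hat{X},\hat{Y})$ and $\sigma\eta^{\bf k}(\hat{Z},[\hat{X},\hat{Y}])$. The first vanishes termwise by the second half of (\ref{etar1}), $\nabla\eta^{\bf k}=0$. For the second, I would decompose $[\hat{X},\hat{Y}]=H[\hat{X},\hat{Y}]+V[\hat{X},\hat{Y}]$, insert the explicit definition of $\eta^{\bf k}$, and use the Jacobi identity $\sigma[\hat{Z},[\hat{X},\hat{Y}]]=0$ together with the parallelism of $g$ and of $\eta^{\bf k}$ to match terms; the cyclic structure forces the metric piece of $\eta^{\bf k}(\hat{Z},[\hat{X},\hat{Y}])$ to cancel in pairs, while the $^aP$ piece is absorbed by $\nabla\eta^{\bf k}=0$ reinterpreted at the level of the components $\varrho[\hat{X},\hat{Y}]$ and $\mu[\hat{X},\hat{Y}]$.

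The main obstacle I foresee is precisely this last cyclic sum in (2): the brackets of horizontal fields are not horizontal, their vertical parts encode Finsler curvature, and $\eta^{\bf k}$ blends a metric term with the non-Riemannian tensor $^aP$. Steps (1) and (3) are algebraic bookkeeping, but (2) genuinely requires rearranging a Jacobi-type sum and using $\nabla\eta^{\bf k}=0$ in its full tensorial strength, rather than merely the weaker statement that $\sigma\eta^{\bf k}=0$ on horizontals.
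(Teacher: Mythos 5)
Your proof follows exactly the route of the paper, whose entire argument reads ``a simple application of Bianchi identities, Eq.~(\ref{antisymmetric}), Eq.~(\ref{etar1}) and Eq.~(\ref{etar2})'': you substitute $\bar{\Omega}=\Omega-\eta^{\bf k}$ into (\ref{C1}), (\ref{C2}) and (\ref{antisymmetric}) and invoke the cyclic vanishing and parallelism of $\eta^{\bf k}$. You in fact give more detail than the paper does, correctly isolating the one genuinely nontrivial point --- the cyclic sum $\sigma\eta^{\bf k}(\hat{Z},[\hat{X},\hat{Y}])$ in part (2), whose bracket arguments are not horizontal and which is therefore not literally covered by (\ref{etar1}) --- a point the paper passes over in silence.
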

 \begin{proof} The proof is a simple application of Bianchi identities, Eq.(\ref{antisymmetric}), Eq.(\ref{etar1}) and Eq.(\ref{etar2}).
 \end{proof}
\textit{Proof of Theorem \ref{mainthm1}}.
Let $\hat{X}$, $\hat{Y}$ and $\hat{Z}$ be three horizontal vector fields on $TM_0$ such that $\hat{X},\hat{Y}\in N^{\bf k}_z$. Taking into account Eq.(\ref{S}) and Eq.(\ref{HR}), by a straightforward computation, we have $$\varrho[\hat{X},\hat{Y}]=[X,Y]_\pi,$$
\begin{equation}
\label{mu}
\mu([\hat{X},\hat{Y}])=-\Omega(\hat{X},\hat{Y}){\bf v}=-\eta^{\bf k}(\hat{X},\hat{Y}){\bf v},
\end{equation}
\begin{equation}
\label{K0}
H[\hat{X},\hat{Y}]=[\hat{X},\hat{Y}]+\eta^{\bf k}(\hat{X},\hat{Y})v^r\overset{\bullet}{\partial}_r.
\end{equation}
In this case, the relation (2) in Lemma \ref{lemi} reduces to
\[
\bar{\Omega}(\hat{X},[\hat{Y},\hat{Z}])+\bar{\Omega}(\hat{Y},[\hat{Z},\hat{X}])
+\bar{\Omega}(\hat{Z},[\hat{X},\hat{Y}])=0
\]
The last equation can be written in the following form:
\begin{equation}
\label{K1}
\bar{\Omega}(\hat{X},V[\hat{Y},\hat{Z}])+\bar{\Omega}(\hat{Y},V[\hat{Z},\hat{X}])
+\bar{\Omega}(\hat{Z},[\hat{X},\hat{Y}])=0.
\end{equation}
Following Eq.(\ref{S0}) and Eq.(\ref{mu}), first and second terms of Eq.(\ref{K1}) become:
\begin{equation}
\label{K2}
\bar{\Omega}(\hat{X},V[\hat{Y},\hat{Z}])={^sP}(X,\mu[\hat{Y},\hat{Z}])=-{^sP}(X,\eta^{\bf k}(\hat{Y},\hat{Z}){\bf v})
\end{equation}
\[
={\bf k}g(Z,{\bf v}){^sP}(X,Y)-{\bf k}g(Y,{\bf v}){^sP}(X,Z),
\]
\begin{equation}
\label{K3}
\bar{\Omega}(\hat{Y},V[\hat{Z},\hat{X}])={^sP}(Y,\mu[\hat{Z},\hat{X}])=-{^sP}(Y,\eta^{\bf k}(\hat{Z},\hat{X}){\bf v})
\end{equation}
\[
={\bf k}g(X,{\bf v}){^sP}(Y,Z)-{\bf k}g(Z,{\bf v}){^sP}(Y,X).
\]
By means of Eq.(\ref{K2}) and Eq.(\ref{K3}) and the symmetry property ${^sP}(X,Y)={^sP}(Y,X)$, Eq.(\ref{K1}) can be written in the following form:
\[
\bar{\Omega}(\hat{Z},[\hat{X},\hat{Y}]+\eta^{\bf k}(\hat{X},\hat{Y})v^r\overset{\bullet}{\partial}_r)=0,
\]
Following Eq.(\ref{K0}), the last equation becomes:
\[
\bar{\Omega}(\hat{Z},H[\hat{X},\hat{Y}])=0,\ \ \ \hat{Z}\in{\cal H}_zTM.
\]
Indeed $H[\hat{X},\hat{Y}]\in N^{\bf k}_z$ and $[X,Y]=\varrho[\hat{X},\hat{Y}]=\varrho(H[\hat{X},\hat{Y}])\in {\cal N}^{\bf k}_x$. Therefore, ${\bf k}$-nullity distribution is involutive or completely integrable.\qed
\\
 Considering the kernel of the operator
$\bar{\Omega}$
\[
\ker_x \bar{\Omega}=\{Z\in T_xM|\ \bar{\Omega}(\hat{X},\hat{Y})Z=0,\ \
\hat{X},\hat{Y}\in {\cal H}_zTM\},
\]
we shall show that  ${\cal N}^{{\bf k}}_x=\ker_x
\bar{\Omega}$.\\
\textit{Proof of Theorem \ref{coincidence}}.
 Let $\hat{X}$, $\hat{Y}$ and $\hat{Z}$ be three horizontal vector fields on $TM_0$ such that $\hat{X},\hat{Y}\notin N^{\bf k}_z$ but $\hat{Z}\in N^{\bf k}_z$. In this case, the relation (1) in Lemma \ref{lemi} reduces to
\[
\bar{\Omega}(\hat{X},\hat{Y})Z=\tau(\hat{X},[\hat{Y},\hat{Z}])+\tau(\hat{Y},[\hat{Z},\hat{X}])+\tau(\hat{Z},[\hat{X},\hat{Y}])
\]
On the other hand, for every vector field $\hat{W}\in{\cal X}(TM_0)$, we have:
\begin{equation}
\label{I}
g(\bar{\Omega}(\hat{X},\hat{Y})Z,W)=
g(\tau(\hat{X},[\hat{Y},\hat{Z}]),W)+g(\tau(\hat{Y},[\hat{Z},\hat{X}]),W)+g(\tau(\hat{Z},[\hat{X},\hat{Y}]),W).
\end{equation}
Considering Eq.(\ref{mu}), we have the following relations for the torsion tensor $\tau$.
\begin{equation}
\label{To1}
\tau(\hat{X},[\hat{Y},\hat{Z}])=T(\hat{X},\mu[\hat{Y},\hat{Z}])={\bf k}g(Y,{\bf v})T(X,Z)-{\bf k}g(Z,{\bf v})T(X,Y),
\end{equation}
\begin{equation}
\label{To2}
\tau(\hat{Y},[\hat{Z},\hat{X}])=T(\hat{Y},\mu[\hat{Z},\hat{X}])={\bf k}g(Z,{\bf v})T(Y,X)-{\bf k}g(X,{\bf v})T(Y,Z),
\end{equation}
\begin{equation}
\label{To3}
\tau(\hat{Z},[\hat{X},\hat{Y}])=T(\hat{Z},\mu[\hat{X},\hat{Y}])={\bf k}g(X,{\bf v})T(Z,Y)-{\bf k}g(Y,{\bf v})T(Z,X).
\end{equation}
Replacing Eqs.(\ref{To1}),(\ref{To3}) and (\ref{To3}), in Eq.(\ref{I}) we obtain
\[
g(\bar{\Omega}(\hat{X},\hat{Y})Z,W)=2{\bf k}\{g(Y,{\bf v})g(T(X,Z),W)+g(Z,{\bf v})g(T(Y,X),W)
\]\[
+g(X,{\bf v})g(T(Z,Y),W)\}.
\]
As a consequence of the relation (3) in Lemma \ref{lemi}, the left hand side of the previous equation is anti-symmetric with respect to $W$ and $Z$. Thus, it follows that $$2{\bf k}\{g(Y,{\bf v})g(T(X,Z),W)+g(X,{\bf v})g(T(Z,Y),W)\}=0.$$
Since $W$ is arbitrarily chosen, we have the following relation:
\begin{equation}
\label{help}
g(Y,{\bf v})T(X,Z)+g(X,{\bf v})T(Z,Y)=0.
\end{equation}
From Eq.(\ref{mu}) one can conclude that
\[
\tau(\hat{Z},[\hat{X},\hat{Y}])=\tau(\hat{Z},V[\hat{X},\hat{Y}])=T(Z,\mu[\hat{X},\hat{Y}])
\]\[
={\bf k}g(X,{\bf v})T(Z,Y)-{\bf k}g(Y,{\bf v})T(Z,X).
\]
By anti-symmetry property of the tensor $T$ and Eq.(\ref{help}), we get
\[
\tau(\hat{Z},V[\hat{X},\hat{Y}])=
{\bf k}g(X,{\bf v})T(Z,Y)+{\bf k}g(Y,{\bf v})T(X,Z)=0.
\]
Plugging Eqs.(\ref{To1}),(\ref{To2}) and (\ref{To3}) into Eq.(\ref{I}) results
\[
g(\bar{\Omega}(\hat{X},\hat{Y})Z,W)=
g(\tau(\hat{X},[\hat{Y},\hat{Z}]),W)+g(\tau(\hat{Y},[\hat{Z},\hat{X}]),W)+g(\tau(\hat{Z},[\hat{X},\hat{Y}]),W)=0.
\]
Therefore, we have
\[
g(\bar{\Omega}(\hat{X},\hat{Y})Z,W)=g(\tau(\hat{Z},[\hat{X},\hat{Y}]),W)=0.
\]
Finally, since $W$ is arbitrarily chosen, we obtain the following equation,
\begin{equation}
\label{E1}
\bar{\Omega}(\hat{X},\hat{Y})Z=\tau(\hat{Z},[\hat{X},\hat{Y}])=T(Z,\mu[\hat{X},\hat{Y}])=0.
\end{equation}
The last equation shows that $Z\in \ker_x\bar{\Omega}$, that is ${\cal N}^{\bf k}_x\subseteq \ker_x\bar{\Omega}$ and $\ker\bar{\Omega}^\perp\subseteq \overset{\perp}{{\cal N}}{^{\bf k}_x}$. Now, let $W\in \overset{\perp}{{\cal N}}{^{\bf k}_x}$ and $U\in{\cal N}^{\bf k}_x$, we have
\[
g(\bar{\Omega}(\hat{X},\hat{Y})W,U)=-g(\bar{\Omega}(\hat{X},\hat{Y})U,W)=0.
\]
The previous equation shows that $\bar{\Omega}(\hat{X},\hat{Y})W\in\overset{\perp}{{\cal N}}{^{\bf k}_x}$, that is $Im_x\bar{\Omega}\subseteq\overset{\perp}{{\cal N}}{^{\bf k}_x}$. For every {\bf k}-nullity vector $U\in{\cal N}^{\bf k}_x$, Eq.(\ref{mu}) yields
\[
g(\mu([\hat{X},\hat{Y}])+\eta^{\bf k}(\hat{X},\hat{Y}){\bf v},U)=-g(\bar{\Omega}(\hat{X},\hat{Y}){\bf v},U)
=g(\bar{\Omega}(\hat{X},\hat{Y})U,{\bf v})=0.
\]
By definition of $\eta^{\bf k}$ and the fact that $X,Y\in\overset{\perp}{{\cal N}}{^{\bf k}_x}$, we obtain $g(\eta^{\bf k}(\hat{X},\hat{Y}){\bf v},U)=0$. Therefore,
\begin{equation}
\label{E2}
\mu([\hat{X},\hat{Y}])\in\overset{\perp}{{\cal N}}{^{\bf k}_x}.
\end{equation}
From which $g(\mu([\hat{X},\hat{Y}]),U)=0$.
Consider the following homomorphism of vector spaces:
\[
\Psi:\frac{T_xM}{\ker_x\bar{\Omega}}\cong Im_x\bar{\Omega}\longrightarrow\overset{\perp}{{\cal N}}{^{\bf k}_x},
\]
defined by $W+\ker_x\bar{\Omega}\mapsto \bar{\Omega}(\hat{X},\hat{Y})W$.
It is clear that $\Psi$ is one-to-one and thus it is onto and therefore, ${\scriptstyle \overset{\perp}{{\cal N}}{^{\bf k}_x}}=\ker_x{\scriptstyle \bar{\Omega}^\perp}$ and
${\cal N}^{\bf k}_x=\ker_x\bar{\Omega}$. This completes the proof of Theorem. \qed
\section{Auto-parallel {\bf k}-nullity maximal integral manifold}
\textit{Proof of Theorem \ref{lema}}. The method used here is inspired by Akbar-Zadeh's technic. Let $N$ be an integral manifold of {\bf k}-nullity distribution in $U$. For all vector fields $\widetilde{X},\widetilde{W}\in{\cal X}(TN_0)$ we have by means of Eq.(\ref{second fundamental form})
\begin{equation}
\label{second fundamental form2}
\nabla_{\widetilde{W}}X=\widetilde{\nabla}_{\widetilde{W}}X+\alpha(\widetilde{W},X),
\end{equation}
where, $\widetilde{\nabla}$ denotes the induced connection on $TN_0$, $X=\rho(\widetilde{X})$ and
$\alpha(\widetilde{W},X)$ is the second fundamental form of $N$.
\\
Let $\widetilde{X},\widetilde{Y}\in {\cal H}TN$ such that
$X,Y\in\overset{\perp}{{\cal N}}{^{\bf k}_x}$ and $U\in {\cal N}^{\bf k}_x$. By means of Theorem \ref{coincidence}, we have $\bar{\Omega}(\hat{X},\hat{Y})U=0$. Suppose that $\widetilde{Z}\in
N^{\bf k}_z$. It follows immediately from Eq.(\ref{second fundamental form2}) that the covariant derivative of $\bar{\Omega}$ along $\widetilde{Z}$ becomes
\begin{eqnarray*}
\nonumber
(\nabla_{\widetilde{Z}}\bar{\Omega}(\widetilde{X},\widetilde{Y}))U&=&\nabla_{\widetilde{Z}}\bar{\Omega}(\widetilde{X},\widetilde{Y})U-\bar{\Omega}(\widetilde{X},\widetilde{Y})\nabla_{\widetilde{Z}}U
\\&=&-\bar{\Omega}(\widetilde{X},\widetilde{Y})\nabla_{\widetilde{Z}}U\\&=&-\bar{\Omega}(\widetilde{X},\widetilde{Y})(\widetilde{\nabla}_{\widetilde{Z}}U+\alpha(\widetilde{Z},U))
\\&=&-\bar{\Omega}(\widetilde{X},\widetilde{Y})\alpha(\widetilde{Z},U).
\end{eqnarray*}
Therefore,
\[
(\nabla_{\widetilde{Z}}\bar{\Omega}(\widetilde{X},\widetilde{Y}))U+\bar{\Omega}(\widetilde{X},\widetilde{Y})\alpha(\widetilde{Z},U)=0.
\]
Using the identity (2) in Lemma \ref{lemi} and the above equation, we obtain
\begin{equation}
\label{E3}
\bar{\Omega}(\widetilde{X},\widetilde{Y})\alpha(\widetilde{Z},U)=\bar{\Omega}(\widetilde{Z},[\widetilde{X},\widetilde{Y}])U
={^sP}(\widetilde{Z},\mu[\widetilde{X},\widetilde{Y}])U.
\end{equation}
If we assume $\mu[\widetilde{X},\widetilde{Y}]=0$, then we have  $\bar{\Omega}(\widetilde{X},\widetilde{Y})\alpha(\widetilde{Z},U)=0$. On the other hand  $\alpha(\widetilde{Z},U)\in\overset{\perp}{{\cal N}}{^{\bf k}_x}$, it follows that $\alpha(\widetilde{Z},U)=0$. In this case, the integral manifold $N$ is an auto-parallel submanifold. Otherwise, assume that $\mu[\widetilde{X},\widetilde{Y}]\neq0$. Consider a basis $\{{\bf e}_1,{\bf e}_2,...,{\bf e}_n\}$ for $T_xM$ such that, the first $r$ vectors form a basis for ${\cal N}^{\bf k}_x$ and the rest $(n-r)$ vectors is a basis for $\overset{\perp}{{\cal N}}{^{\bf k}_x}$. In virtue of Eq.(\ref{E2}), without loss of generality, one can assume that the vector $\mu[\widetilde{X},\widetilde{Y}]$ is an element of basis $\{{\bf e}_{r-1},...,{\bf e}_{n}\}$. In the sequel, assume that the following indices run over the indicated ranges
\begin{equation}
\label{convention}
a,b=1,2,...,n,\ \ \ \ \alpha,\beta=1,2,...,r,\ \ \ \ i,j=r-1,...,n.
\end{equation}
Eq.(\ref{E1}) states that, in this basis, we have
\begin{equation}
\label{T Cartan}
T_{a\alpha j}=0
\end{equation}
Plugging it into Eq.(\ref{Ps formula}), it results
\[
{^sP}_{ia\beta j}=\nabla_iT_{a \beta j}
+\frac{1}{2}\{T_{i\beta r}\nabla_0T^r_{\ a j}-T^r_{\ \beta a}\nabla_0T_{irj}
+T_{ijr}\nabla_0T^r_{\ a \beta }-T^r_{\ ja}\nabla_0T_{ir\beta }\}=0.
\]
From the last equation, it results that ${^sP}(\widetilde{Z},\mu[\widetilde{X},\widetilde{Y}])U=0$. Eq.(\ref{E3}) implies that  $\bar{\Omega}(\widetilde{X},\widetilde{Y})\alpha(\widetilde{Z},U)=0$, that is to say $\alpha(\widetilde{Z},U)\in\ker\bar{\Omega}={\cal N}^{\bf k}_x$. It follows $\alpha(\widetilde{Z},U)=0$ and $N$ is an auto-parallel submanifold.

Denote the curvature 2-forms of $\widetilde{\nabla}$ by $\widetilde{\Omega}$. Since $N$ is an auto-parallel submanifold of $M$, its curvature tensors are given by
\[
\widetilde{\Omega}(H\widetilde{X},H\widetilde{Y})Z=\Omega(H\widetilde{X},H\widetilde{Y})Z={\bf k}\{\widetilde{g}(Y,Z)X-\widetilde{g}(X,Z)Y\},
\]\[
\widetilde{\Omega}(H\widetilde{X},V\widetilde{Y})Z=\Omega(H\widetilde{X},V\widetilde{Y})Z={^sP}(X,\dot{Y})Z,
\]\[
\widetilde{\Omega}(V\widetilde{X},V\widetilde{Y})Z=\Omega(V\widetilde{X},V\widetilde{Y})Z=Q(\dot{X},\dot{Y})Z,
\]
where, $\widetilde{X},\widetilde{Y}\in{\cal X}(TN_0)$ and $Z\in\Gamma(\pi^*TN)$. The above relation shows that, $N$ is a $P$-symmetric space. Indeed components of the $hh-$curvature $\widetilde{R}^\alpha_{\ \beta\gamma\theta}$ of $(N,\widetilde{F})$ are given by
\[
\widetilde{R}^\alpha_{\ \beta\gamma\theta}={\bf k}\{\widetilde{g}_{\beta\gamma}\delta^\alpha_{\ \theta}-\widetilde{g}_{\beta\theta}\delta^\alpha_{\ \gamma}\},
\]
where, $\widetilde{g}$ denotes the induced metric on $(N,\widetilde{F})$. Following Eq.(\ref{HR}), we have
\[
\widetilde{H}(X,{\bf v}){\bf v}=\widetilde{R}(X,{\bf v}){\bf v}={\bf k}\{\widetilde{g}({\bf v},{\bf v})Y-\widetilde{g}(Y,{\bf v}){\bf v}\},
\]
which shows that $N$ is of constant flag curvature ${\bf k}$. \qed
\subsection{Completeness of the {\bf k}-nullity foliation.}
\textit{Proof of Theorem \ref{mainthm2}}. Let $(M,F)$ be an $n$-dimensional Finsler manifold and $\gamma:[0,c)\longrightarrow N$ a geodesic on the integral manifold $N$ of the {\bf k}-nullity foliation in $G$. We shall prove that, $\gamma$ can be extended to a geodesic $\tilde{\gamma}:[0,\infty)\longrightarrow N$ on $N$. We shall proceed the proof with the contrary assumption, by supposing that such geodesic does not exist.
Following Theorem \ref{lema}, every ${\bf k}$-nullity manifold is auto-parallel and hence it is totally geodesic.
Therefore, $\gamma$ is a geodesic on $M$ and has an extension $\tilde{\gamma}:[0,\infty)\longrightarrow M$ such that $\gamma=\tilde{\gamma}\cap N$. It follows that, $p=\tilde{\gamma}(c)\notin G$. Suppose that $p_0=\gamma(0)=\tilde{\gamma}(0)$ and put $r_0=\mu_{\bf k}(p_0)$, the dimension of the {\bf k}-nullity space at $p_0$. The function $\mu_{\bf k}:M\longrightarrow M$ attains its minimum on $G$ and it results that $\mu_{\bf k}(p)>r_0$. Consider a basis  ${\cal B}_0=\{{\bf e}_1,{\bf e}_2,...,{\bf e}_{r_0},{\bf e}_{r_0+1},...,{\bf e}_n\}$ for $T_{p_{_0}}M$ such that $\{{\bf e}_1,{\bf e}_2,...,{\bf e}_{r_0}\}$ is a basis for ${\cal N}^{\bf k}_{p_{_0}}$ and ${\bf e}_1$ is the tangent vector to $\gamma$ at the point $p_0=\gamma(0)$. Using the following system of differential equations
\[
\frac{\nabla {\bf E}_i}{dt}=0,\ \ \ \ \ {\bf E}_i(0)={\bf e}_i,
\]
where $i=1,2,...,n$, one can translate the basis ${\cal B}_0$ into the parallel frame ${\cal B}=\{{\bf E}_1,{\bf E}_2,...,{\bf E}_{r_0},{\bf E}_{r_0+1},...,{\bf E}_n\}$ along $\tilde{\gamma}$. There is a neighborhood $U$ of $p$ on $M$ such the subset $\{{\bf E}_1,{\bf E}_2,...,{\bf E}_{r_0}\}$ is a basis for the ${\bf k}$-nullity space at every point $\tilde{\gamma}(t)$ in $G\cap U$. Since $\mu_{\bf k}(p)>r_0$, there is a vector field ${\bf E}_a$ along $\tilde{\gamma}$, for a fixed number $a$ in the range $r_0+1,...,n$, such that for every $t\in[0,c)$, we have ${\bf E}_a(t)\in\overset{\perp}{{\cal N}}{^{\bf k}_{\gamma(t)}}$ and ${\bf E}_a(c)\in{\cal N}^{\bf k}_{p}$. Now, let $\hat{\tilde{\gamma}}=(\tilde{\gamma},\dot{\tilde{\gamma}})$ be the natural lift of $\tilde{\gamma}$ to $TM_0$ and $\hat{{\cal B}}=\{\hat{{\bf E}}_1,\hat{{\bf E}}_2,...,\hat{{\bf E}}_{r_0},\hat{{\bf E}}_{r_0+1},...,\hat{{\bf E}}_n\}$ the basis for ${\cal H}_{\hat{\tilde{\gamma}}(t)}TM$ such that $\varrho(\hat{{\bf E}}_i)={\bf E}_i$. Assume that the coefficients $f_{ija}$ are defined as follows
\begin{equation}
\label{f}
\bar{\Omega}(\hat{{\bf E}}_i,\hat{{\bf E}}_j){\bf E}_a=f_{ija}.
\end{equation}
Using the relation (2) in Lemma \ref{lemi}, the Cartan horizontal derivative of both sides of Eq.(\ref{f}) along $\hat{\tilde{\gamma}}$ in $\pi^{-1}(G\cap U)$ and using the fact that, $\bar{\Omega}(\hat{{\bf E}}_1,V[\hat{{\bf E}}_j,\hat{{\bf E}}_i])=0$, we obtain
\begin{equation}
\label{Fin}
f^\prime_{ija}+\bar{\Omega}(\hat{{\bf E}}_j,[\hat{{\bf E}}_i,\hat{{\bf E}}_1])+\bar{\Omega}(\hat{{\bf E}}_i,[\hat{{\bf E}}_1,\hat{{\bf E}}_j])=0,
\end{equation}
where, $i,j=r_0+1,...,n$. Plugging $\hat{{\bf E}}_j$, $\hat{{\bf E}}_i$ and $\hat{{\bf E}}_1$ instead of $\hat{X}$, $\hat{Y}$ and $\hat{Z}$ into Eq.(\ref{K2}) and Eq.(\ref{K3}) respectively, we obtain
\[
\bar{\Omega}(\hat{{\bf E}}_j,V[\hat{{\bf E}}_i,\hat{{\bf E}}_1])+\bar{\Omega}(\hat{{\bf E}}_i,V[\hat{{\bf E}}_1,\hat{{\bf E}}_j])=0.
\]
Therefore, Eq.(\ref{Fin}) becomes
\begin{equation}
\label{FinFin}
f^\prime_{ija}+\bar{\Omega}(\hat{{\bf E}}_j,H[\hat{{\bf E}}_i,\hat{{\bf E}}_1])+\bar{\Omega}(\hat{{\bf E}}_i,H[\hat{{\bf E}}_1,\hat{{\bf E}}_j])=0.
\end{equation}
But, the horizontal part of $[\hat{{\bf E}}_j,\hat{{\bf E}}_1]$ can be written in the basis $\hat{{\cal B}}$ in the form
\[
H[\hat{{\bf E}}_1,\hat{{\bf E}}_j]={\bf W}^k_j\hat{{\bf E}}_k+{\bf W}^a_j\hat{{\bf E}}_a,
\]
for some functions ${\bf W}^k_j$ defined on $\hat{\tilde{\gamma}}$ in $\pi^{-1}(U)$, where the index $k$ runs over the range $1,...,\hat{a},...n$ and the hat over $a$ indicates that the index $a$ is omitted. Plugging the terms $H[\hat{{\bf E}}_j,\hat{{\bf E}}_1]$ and $H[\hat{{\bf E}}_1,\hat{{\bf E}}_i]$ into Eq.(\ref{FinFin}), we obtain the homogenous system of ODEs
\[
f^\prime_{ija}+{\bf W}^k_if_{jka}-{\bf W}^k_jf_{ika}=0.
\]
Since ${\bf E}_a$ is a ${\bf k}$-nullity vector field at $p$, by means of Eq.(\ref{f}), we have clearly  for the fixed index $a$, $f_{lma}(c)=0$, where, $l,m=r_0+1,...,n$. Solving the system of ODEs above with initial value $f_{lma}(c)=0$ implies that $f_{lma}\equiv0$. Eq.(\ref{f}), implies that, ${\bf E}_a$ is a ${\bf k}$-nullity vector at every point of $\tilde{\gamma}$ in $G\cap U$ and specially, it is a ${\bf k}$-nullity vector at every point of $\gamma$ in $G\cap U$. Obviously, this is merely a contradiction to the contrary hypothesis and $\gamma$ can be extended to a geodesic $\tilde{\gamma}:[0,\infty)\longrightarrow N$. \qed

 We remark that, relaxing the constant ${\bf k}$ to be zero in the Eq.(\ref{def bar omega}) leads to a notion of non-Riemannian nullity in Finsler geometry which is a special case of the nullity space in \cite{A5}.

\end{document}